\documentclass[12pt,reqno]{amsart}
\headheight=6.15pt \textheight=8.75in \textwidth=6.5in
\oddsidemargin=0in \evensidemargin=0in \topmargin=.5in

\usepackage{latexsym}
\usepackage{amssymb}

\usepackage{epsfig}
\newcommand{\cp}{\mathbb{CP}^1}
\newcommand{\PR}{{\bf Prob}}

\newcommand{\szego}{Szeg\"o\ }

\newcommand{\kahler}{K\"ahler\ }

\newcommand{\PP}{{\mathbb P}}
\newcommand{\R}{{\mathbb R}}
\newcommand{\C}{{\mathbb C}}

\newcommand{\CP}{\C\PP}

\newcommand{\dbar}{\bar\partial}
\newcommand{\ddbar}{\partial\dbar}

\newcommand{\E}{{\mathbf E}}

\newcommand{\half}{{\frac{1}{2}}}

\renewcommand{\phi}{\varphi}

\newcommand{\dcal}{\mathcal{D}}
\newcommand{\ecal}{\mathcal{E}}

\newcommand{\jcal}{\mathcal{J}}
\newcommand{\lcal}{\mathcal{L}}
\newcommand{\pcal}{\mathcal{P}}
\newcommand{\mcal}{\mathcal{M}}

\newcommand{\ocal}{\mathcal{O}}

\newtheorem{theo}{{\sc Theorem}}
\newtheorem{cor}[theo]{{\sc Corollary}}

\newtheorem{lem}[theo]{{\sc Lemma}}
\newtheorem{prop}[theo]{{\sc Proposition}}
\newtheorem{maintheo}{{\sc Theorem}}

\newenvironment{rem}{\medskip\noindent{\it Remark:\/} }{\medskip}

\title[Large deviations for zeros of $P(\phi)_2$ random
polynomials
 ] {Large deviations for zeros of $P(\phi)_2$  random
polynomials}

\author{Renjie Feng and Steve Zelditch}
\address{Department of Mathematics, Northwestern University,
Evanston IL,  60208-2730, USA}

\thanks{Research partially supported by NSF grant  \# DMS-0904252.}

\date{\today}

\begin{document}

\begin{abstract} We extend the results of \cite{ZZ} on LDP's
 (large deviations principles) for the empirical measures $$ Z_s: = \frac{1}{N} \sum_{\zeta: s(\zeta) = 0}
 \delta_{\zeta}, \;\;\; (N: = \# \{\zeta: s(\zeta) = 0)\}$$
 of zeros of Gaussian random polynomials $s$ in one variable to
 $P(\phi)_2$ random polynomials. The speed and rate function are the same
 as in the associated Gaussian case. It follows that the expected
 distribution of zeros in the $P(\phi)_2$ ensembles tends to the
 same equilibrium measure as in the Gaussian case.

 \end{abstract}

 \maketitle

The purpose of this note is to extend  the LDP (large deviation
principle) of \cite{ZZ} for the empirical measure
 \begin{equation}\label{ZN} Z_s: = d\mu_{\zeta}: =  \frac{1}{N}
\sum_{\zeta: s(\zeta) = 0}
 \delta_{\zeta}, \;\;\; N: = \# \{\zeta: s(\zeta) = 0\} \end{equation}
  of zeros of  Gaussian
random holomorphic polynomials $s$ of degree $N$ in one variable
to certain non-Gaussian measures which we call  $P(\phi)_2$ random
 polynomials. These are finite dimensional analogues of (or approximations to) the
 ensembles of quantum field theory,  where the probability measure on the space
 of functions (or distributions) has the form $ e^{- S(f)} df$, with
\begin{equation}\label{Sa}  S(f) = \int (|\nabla f|^2 +  |f|^2 + Q(|f|^2)d
\nu,
\end{equation} where $Q$ is a semi-bounded polynomial. A more precise definition is given below;
we  refer to  \cite{Si} for background on $P(\phi)_2$ theories.
Our main results are that the empirical measures of zeros for such
$P(\phi)_2$ random polynomials satisfies an LDP with precisely the
same speed and rate functional as in the Gaussian case in
\cite{ZZ} where $Q = 0$. In fact, our proof is to reduce the LDP
to that  case.  As a corollary, the expected distribution
$\frac{1}{N} \E_N Z_s$ of zeros in the $P(\phi)_2$ case tends to
the same  weighted equilibrium measure as in the Gaussian case. In
the Gaussian case, the proof of the last statement is derived from
the asymptotics of the two point function (see  \cite{SZ1,SZ2,B});
in the $P(\phi)_2$ case, the large deviations proof  is the first
and only one we know.

To state the result precisely, we need some notation and terminology.
 By a random polynomial, one means a probability
measure $\gamma_N$ on the vector space $\pcal_N$ of polynomials
$p(z) = \sum_{j = 0}^N a_j z^j$ of degree $N$. As in \cite{ZZ}, we
identify polynomials $p(z)$ on $\C$ with holomorphic sections $s \in
H^0(\CP^1, \ocal(N))$, where  $\ocal(N)$ is the $N$th power of the hyperplane section
line bundle $\ocal(1)$; strictly speaking, in the local coordinate, $s = p e^N$ where $p$
is the polynomial of degree $N$ and $e^N$ is a frame for $\ocal(N)$. The
geometric language is useful for compactifying the problem to
$\CP^1$, and we refer to \cite{SZ1,ZZ} for background.

 In \cite{ZZ}, the authors chose
$\gamma_N$ to be a Gaussian measure, $$\gamma_N = e^{-
||s||_{(h^N, \nu)}^2} ds,$$ determined by an inner product on
$\pcal_N$,
\begin{equation} \label{INNERPRODUCT} ||s||^2_{(h^N, \nu)} :=
\int_{\CP^1} |s(z)|^2_{h^N} d\nu(z).
\end{equation}
Here, $\nu$ is an auxiliary probability measure and  $h $ is a smooth Hermitian metric on $\ocal(1)$ and $h^N$
is the induced metric on the powers $\ocal(N)$. In the local frame $e$,
$h$ takes the classical form of a weight $h = e^{- \phi}$; the
assumption is that it extends smoothly to $\ocal(1) \to \CP^1$.
Thus in the local coordinate, we rewrite
\begin{equation}  ||s||^2_{(h^N, \nu)} =  \int_{\C} |p(z)|^2 e^{- N
\phi(z)}  d\nu(z).
\end{equation}

In this article, we study  the probability measures
\begin{equation} \label{GL} \gamma_N = e^{- S(s)} ds \;\;\; \mbox{on}\;\; \pcal_N, \end{equation}
where  $ds$ denotes Lebesgue measure and the action $S$ has the
form,
\begin{equation}\label{S}  S(s) = \int_{\CP^1} |\nabla s(z)|_{h^N \otimes
g}^2 d\nu
 +  \int_{\CP^1} P(|s|_{h^N}^2) d \nu,
\end{equation}
where \begin{equation}\label{mnS} P(x) = \sum_{j = 1}^k c_j x^j, \;\; \mbox{with}\; c_k = 1
\end{equation} is a semi-bounded polynomial. Here,  $\nabla: C^{\infty}(\CP^1,
\ocal(N)) \to C^{\infty}(\CP^1, \ocal(N) \otimes T^*)$ is a smooth
connection on the line bundle $\ocal(N) \to \CP^1$, and  $g$ is a
smooth Riemannian metric on $\mathbb{CP}^1$. We recall that
connections are the first order derivatives which are well-defined
on sections of line bundles. We will take $\nabla$ to be the Chern
connection of a smooth connection  $h$ on $\ocal(1)$ and its
extension to the tensor powers $\ocal(N)$ (which strictly speaking
should be denoted by $\nabla_N$). Note that the more elementary
holomorphic derivative $\partial p(z) = p'(z)$ defines a
meromorphic connection on $\ocal(N)$ with a pole at infinity,
rather than a smooth connection. We refer to \S \ref{KINETIC} and
\cite{GH,ZZ} for further background.

The integral $\int_{\CP^1} |\nabla s(z)|_{h^N \otimes g}^2 d\nu$
is expressed in (\ref{kic}) in local coordinates.  We often denote
the first integral in  $S(s)$ as $\|\nabla s\|_{(h^N\otimes g,
\nu)}^2$ and the second as $\int P(|s|_{h^N}^2)$. In $P(\phi)_2$
Euclidean quantum field theory,
 $S(s)$ is known as the action, $\|\nabla s\|^2$
is known as the kinetic energy term,  $P$ is the potential,  and
$\lcal(s) = |\nabla s|^2 + P(|s|^2)$ is  the Lagrangian (see e.g.
\cite{GJ,Si}). The Gaussian case is the `non-interacting' or free
field theory with quadratic Lagrangian $\lcal_0 = |\nabla s|^2 +
m|s|^2$; while in the general $P(\phi)_2$ case, the non-quadratic
part of $P$ is known as the interaction term. The Gaussian case
was studied in \cite{ZZ} without the (also Gaussian) kinetic term.

The large deviations result for empirical measures of zeros
concerns a sequence $\{\PR_N\}$ of probability measures on the
space $\mcal(\CP^1)$ of probability measures on $\CP^1$. Roughly,
$\PR_N(B)$ is the probability that the empirical measure of zeros
of a random $p \in \pcal_N$ lies in the set $B$. To be precise, we
recall some of the definitions from \cite{ZZ}.
 The zero set $\{\zeta_1, \dots,
\zeta_N\}$ of a polynomial of degree $N$ is a point of the $N$th
configuration space,
 \begin{equation} \label{CONFIG} (\CP^1)^{(N)} = Sym^{N} \CP^1 :=  \underbrace{\CP^1\times\cdots\times \CP^1}_N /S_{N}. \end{equation}
   Here, $S_N$ is the symmetric
  group on $N$ letters. We  push forward the measure $\gamma_N$ on
  $\pcal_N$ under the  `zeros'  map
  \begin{equation} \label{ZEROSMAP} \dcal: \pcal_N \to (\CP^1)^{(N)}, \;\;\;\dcal(s)= \{\zeta_1,  \dots,  \zeta_N\}, \end{equation}
  where $\{\zeta_1, \dots, \zeta_N\}$ is the zero set of $s$,
  to obtain a measure
  \begin{equation} \label{JPCDEF}\vec K^N(\zeta_1, \dots, \zeta_N) : =  \dcal_* d \gamma_N \end{equation}
  on $ (\CP^1)^{(N)}$, known as the joint probability
 current (or distribution), which we abbreviate by JPC.
We then embed the configuration
  spaces into $\mcal(\CP^1)$ (the space of probability
  measures on $\CP^1$) under the map,
 \begin{equation}\label{DELTADEF} \mu : (\CP^1)^{(N)} \to \mcal(\CP^1), \;\;\;  d\mu_{\zeta}: =
  \frac{1}{N} \sum_{j = 1}^{N}
 \delta_{\zeta_j}. \end{equation} The measure $d\mu_{\zeta}$ is known as the empirical measure
 of zeros of $p$.  We then  push
forward the joint probability current
 to obtain a probability  measure
\begin{equation} \label{LDPNa} \PR_N =  \mu_* \dcal_* \gamma^N \end{equation}  on
 $\mcal(\CP^1)$. The sequence $\{\PR_N\}$  is said to satisfy a large deviations
 principle with speed $N^2$ and rate functional (or rate function) $I$  if (roughly speaking)
 for any Borel subset $E
\subset \mcal(X)$,
$$\frac{1}{N^2} \log \PR_N \{\sigma \in \mcal: \sigma \in E\}
\to - \inf_{\sigma \in E} I(\sigma). $$ To be precise, the
condition is that
\begin{equation}
    \label{eq-ref1}
    - I(\sigma):= \limsup_{\delta \to 0} \limsup_{N \to \infty}
\frac{1}{N^2} \log {\bf Prob} _N(B(\sigma, \delta)) =
\liminf_{\delta \to 0} \liminf_{N \to \infty} \frac{1}{N^2} \log
{\bf Prob}_N(B(\sigma, \delta)),
\end{equation}
for balls in the natural (Wasserstein) metric (see Theorem 4.1.11
of \cite{DZ}).

\subsection{Statement of results}

Our first results give an LDP for  slightly simpler $P(\phi)_2$
ensembles where the action does
 not contain the kinetic term, i.e., we choose the probability measure to be $\gamma_N=e^{-S(s)}ds$
  where $S(s)=\int P(|s|_{h^N}^2)$. In \S \ref{KINETIC} we add the
  kinetic term.

To obtain a large deviations result,  we need to impose some
conditions on the probability measure $\nu$ that is used to
defined the integration measure on $\CP^1$ in the inner product
(\ref{INNERPRODUCT}) and the $P(\phi)_2$ measures (\ref{GL}). In
the pure potential case in \S \ref{without}, it  must satisfy the
mild conditions of \cite{ZZ}:  (i) the Bernstein-Markov condition,
and (ii) that the support $K$ of $\nu$ must be `regular' in the
sense that it is non-thin at all of its points. We call such
measures {\it admissible}. We refer to \cite{B,ZZ} for background
on Bernstein-Markov measures and regularity. When we include the
kinetic term, we must assume more about $\nu$ (see below).

  If $\gamma_N$ is defined by an \textit{admissible} measure $\nu$,
  then we prove that the speed and the rate function are the same as in the associated Gaussian case \cite{ZZ} where $P(x)=x$.

\begin{maintheo}\label{POTENTIAL}  Let $h = e^{- \phi}$ be a smooth Hermitian metric
 on $\ocal(1) \to \CP^1$ and let
$\nu \in  \mcal(\CP^1)$  be an admissible measure. Let
$P(|s|^2_{h^N})$ be a semi-bounded  polynomial defined by
(\ref{mnS}), and let $\gamma_N$ be the probability measure defined
by the action $S(s)=\int_{\mathbb{CP}^1}P(|s|^2_{h^N})d\nu$
without the kinectic term.
  Then the sequence of probability measures
$\{ \PR_N\}$ on $\mcal(\CP^1)$ defined by (\ref{LDPNa}) satisfies
a large deviations principle with speed $N^2$ and rate functional
\begin{equation} \label{IGREEN}  I^{h, K} (\mu) =
- \frac{1}{2} \ecal_{h}(\mu) + \sup_K U^{\mu}_{h} + E(h) .
\end{equation}
This rate functional is lower semi-continuous, proper and convex,
and its  unique minimizer $\nu_{h, K} \in \mcal(\CP^1)$ is   the
Green's equilibrium measure of $K$ with respect to $h$.
\end{maintheo}

Here, $\ecal_h(\mu) = \int_{\CP^1 \times \CP^1} G_h(z,w) d\mu(z)
d\mu(w)$ is the Green's energy, where $G_h(z,w)$ is the Green's
function with respect to $h$ (see \cite{ZZ} (6)). Also,
$U_h^{\mu}(\mu) = \int_{\CP^1} G_h(z,w) d\mu(w)$ is the Green's
potential of $\mu$.

Things become more complicated when the action includes the
kinetic term. We  could choose independently the integration
measures in the kinetic and potential terms, but for  the sake of
simplicity we only use  the same measure $\nu$ for both terms.  We
then impose an extra condition  on $\nu$ (and $\nabla$), namely
that $\nabla$  satisfies a  weighted $L^2$ Bernstein inequality,
\begin{equation} \label{BERN} \|\nabla s\|^2_{(h^N \otimes
g,\nu)}\leq CN^k \|s\|^2_{(h^N, \nu)}
\end{equation}  on all $H^0(\CP^1, \ocal(N))$, for some  $k,
C(h,g,\nu)> 0.$ When $\nu$ is admissible and such bounds hold, we
say that $\nu$ (or $(h, \nu, \nabla)$) is {\it kinetic
admissible}.
  In Lemma \ref{volume}, we show that if
$h=e^{-\phi}$ is a Hermitian metric on $\ocal(1)$ with positive
curvature form $\omega_h$ and $g$ is any fixed Riemannian metric,
then $\nu = \omega_h$ is kinetic admissible, and in fact
\begin{equation}\label{crucialt}\|\nabla s\|^2_{(h^N \otimes
g,\nu)}\leq CN^2\|s\|^2_{(h^N, \nu)}. \end{equation}

We then extend Theorem \ref{POTENTIAL} to the full $P(\phi)_2$
case. Perhaps surprisingly, when $(h,\nu, \nabla)$ is kinetic
admissible,  the kinetic term becomes a `lower order term' if
$P(x)$ contains non-quadratic terms.

\begin{maintheo}\label{POTENTIALKINETIC}  Let
 $(h,\nu, \nabla)$  be  kinetic  admissible in the sense that (\ref{BERN}) holds.  Let
$P(|s|^2_{h^N})$ be a semi-bounded  polynomial as above,  and let
$\gamma_N$ be the associated $P(\phi)_2$ measure defined by the
action (\ref{S}).
  Then the sequence of probability measures
$\{ \PR_N\}$ on $\mcal(\CP^1)$ defined by (\ref{LDPNa}) satisfies
a large deviations principle with speed $N^2$ and the same  rate
functional
  $I^{h, K} (\mu)$ as in Theorem \ref{POTENTIAL}.

\end{maintheo}

The proofs of Theorems \ref{POTENTIAL}-\ref{POTENTIALKINETIC} are
to relate the LDP for the $P(\phi)_2$ ensemble to the LDP for the
associated (quadratic) Gaussian ensemble without kinetic term
studied in \cite{ZZ}.  To avoid duplication, we refer the reader
to the earlier article for steps in the proof which carry over to
$P(\phi)_2$ measures with no essential change. There are two new
steps that are not in \cite{ZZ}. The first new step (Propositions
\ref{FSVOLZETA2intro} and \ref{FSVOLZETA2introb}) is the
calculation of the JPC (joint probability current, or
distribution) of zeros in the $P(\phi)_2$ ensembles. The main
observation underlying this note is that the calculation of the
JPC in the Gaussian ensemble in \cite{ZZ} extends easily to the
$P(\phi)_2$ case.  The second new step  (loc. cit.) is the
reduction of the proof of the LDP to that of \cite{ZZ} by bounding
the approximate rate function in the $P(\phi)_2$ case above and
below by that in the Gaussian case.

As a direct consequence of Theorems
\ref{POTENTIAL}-\ref{POTENTIALKINETIC}  we obtain,
\begin{cor}\label{EQDIST} With all assumptions in Theorems \ref{POTENTIAL}-\ref{POTENTIALKINETIC} , let $\E_N (Z_s)$ be the expected value of
the empirical measure with respect to $\gamma_N$. Then, $\E_N(Z_s)
\to \nu_{h, K}$ which is the equilibrium measure determined by $h$
and $K$. \end{cor}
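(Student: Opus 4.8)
The plan is to deduce the corollary from the large deviations principle of Theorems \ref{POTENTIAL}--\ref{POTENTIALKINETIC} by the standard argument that an LDP with a \emph{unique} minimizer forces the underlying measures to concentrate at that minimizer, and then to upgrade this concentration to convergence of the expected empirical measure. I work throughout on the space $\mcal(\CP^1)$, which is compact because $\CP^1$ is compact and the Wasserstein metric metrizes weak-$*$ convergence of probability measures. On this compact space the lower semicontinuous rate functional $I^{h, K}$ is automatically a good rate function (its sublevel sets are compact), and the local LDP (\ref{eq-ref1}) upgrades, by exponential tightness and Theorem 4.1.11 of \cite{DZ}, to the full LDP with the upper bound valid for all closed sets. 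Since the $\PR_N$ are probability measures, applying the upper and lower bounds to the whole space $\mcal(\CP^1)$ forces $\min I^{h,K}=0$, so the unique minimizer $\nu_{h,K}$ satisfies $I^{h,K}(\nu_{h,K})=0$.

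First I would establish concentration. Fix $\de>0$ and set $F_\de:=\mcal(\CP^1)\sm B(\nu_{h,K},\de)$, a closed and hence compact subset not containing $\nu_{h,K}$. Since $I^{h,K}$ is lower semicontinuous it attains its infimum over the compact set $F_\de$, and because $\nu_{h,K}$ is the \emph{unique} minimizer (with minimum value $0$), this infimum is strictly positive: $c_\de:=\inf_{F_\de}I^{h,K}>0$. The LDP upper bound then yields
\begin{equation}
\limsup_{N\to\infty}\frac{1}{N^2}\log\PR_N(F_\de)\leq -c_\de<0,
\end{equation}
so $\PR_N(F_\de)\to 0$ for every $\de>0$. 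In other words, under $\gamma_N$ the empirical measure $Z_s$ converges in probability to $\nu_{h,K}$ in the Wasserstein metric.

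Next I would pass from convergence in probability to convergence of the expectation. For any $f\in C(\CP^1)$ the pairing $\langle Z_s,f\rangle$ is a random variable bounded in absolute value by $\|f\|_\infty$, since $Z_s$ is a probability measure. As the Wasserstein metric metrizes weak convergence, the previous step gives $\langle Z_s,f\rangle\to\langle \nu_{h,K},f\rangle$ in probability, and by the bounded convergence theorem
\begin{equation}
\langle \E_N(Z_s),f\rangle=\E_N\langle Z_s,f\rangle\longrightarrow \langle \nu_{h,K},f\rangle.
\end{equation}
Since $f\in C(\CP^1)$ is arbitrary, this is exactly the assertion $\E_N(Z_s)\to\nu_{h,K}$ weakly.

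The only substantive point is the strict positivity $c_\de>0$ in the concentration step; it rests on the uniqueness of the minimizer supplied by Theorems \ref{POTENTIAL}--\ref{POTENTIALKINETIC} together with the good-rate-function property, which is automatic here because $\mcal(\CP^1)$ is compact. Once that is in hand the rest is routine: the upgrade from convergence in probability to convergence of expectations introduces no tightness or uniform-integrability difficulty precisely because $\CP^1$ is compact, so that all test functions, and hence the random variables $\langle Z_s,f\rangle$, are uniformly bounded.
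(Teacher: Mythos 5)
Your proof is correct, and it is essentially the argument the paper intends: the paper's proof consists of the single assertion that $\lim_{N\to\infty}\E_N(Z_s)$ must be the unique minimizer of the rate functional, and your write-up simply supplies the standard details behind that assertion (compactness of $\mcal(\CP^1)$ making $I^{h,K}$ a good rate function, concentration of $\PR_N$ on balls around $\nu_{h,K}$ via the LDP upper bound and uniqueness of the minimizer, then bounded convergence to pass to expectations). No gaps; your filled-in version is what the paper leaves implicit.
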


Indeed, the limit measure $\lim_{N \to \infty} \E_N(Z_s)$ must be
the unique minimizer of the rate functional.  Convergence of the
expected distribution of zeros to the equilibrium measure was
first proved for Gaussian random polynomials with `subharmonic
weights' in \cite{SZ1} and for flat weights and real analytic $K$
in \cite{SZ2}. In \cite{B}, the flat result was generalized to
admissible measures. Corollary \ref{EQDIST} is the first result to
our knowledge for probability measures of the form (\ref{GL}). In
fact, we are not aware of prior results on these finite
dimensional approximations to $P(\phi)_2$  quantum field theories.
The results may have an independent interest in illustrating a
novel kind of high frequency cutoff for such theories (in a
holomorphic sector).

In conclusion, we thank O. Zeitouni for discussions and
correspondence on this note.

\subsection{An example:  Kac-Hammersley}

As an illustration of the methods and results, we consider a
$P(\phi)_2$ generalization of the  Kac-Hammersley ensemble. The
classical Kac-Hammersley ensemble is the Gaussian random
polynomial
$$s(z)=\sum_{j=0}^N a_j z^j, \,\,\,\, z\in \mathbb{C}$$ where the coefficients $a_j$ are
independent complex Gaussian random variable of mean 0 and
variance 1. In this case,  $\E(Z_s)\rightarrow \delta_{S^1}$ as
the week limit.

In the Gaussian case,   $d\nu = \delta_{S^1}$ (the invariant
probability measure on the unit circle), the weight $e^{-\phi} =
1$ and $g$ is the flat metric. Hence the inner product
(\ref{INNERPRODUCT}) reads
$$\|s\|_{\delta_{S^1}}^2=\frac{1}{2\pi}\int_{0}^{2\pi}|s(e^{i\theta})|^2d\theta$$
where $s$ is a polynomial of degree $N$.

We now use the same metrics and measures, together with any
semi-bounded polynomial $P(|s|^2)$,  to define the kinetic
$P(\phi)_2$ Kac-Hammersley ensemble. We  note that $\delta_{S^1}$
is \emph{admissible} \cite{ZZ}. Second, inequality
(\ref{crucialt}) holds for any polynomials: In the setting of
Kac-Hammersley, the connection $\nabla$ is equal to
$d=\partial+\bar \partial$, thus
$$\nabla s=(\sum _{j=1}^N ja_j z^{j-1})dz$$ thus  $$\|\nabla
s\|_{\delta_{S^1}}^2=\sum _{j=1}^N j^2|a_j|^2 \leq N^2 \sum
_{j=0}^N |a_j|^2=N^2\| s\|_{\delta_{S^1}}^2$$ Hence,  Theorems
\ref{POTENTIAL} - \ref{POTENTIALKINETIC} hold in this case and we
have
\begin{cor} In the setting of  Kac-Hammersley, let  Let $\E_N(Z_s)$ be the expected
value of the empirical measure with respect to $\gamma_N$ defined by $P(\phi)_2$ action (\ref{S}) with the kinetic term. Then, $\E_N(Z_s)\rightarrow \delta_{S^1}$.
\end{cor}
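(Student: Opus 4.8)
The plan is to obtain the corollary as an immediate application of Theorem~\ref{POTENTIALKINETIC} and Corollary~\ref{EQDIST}, so that the only genuine work is to confirm the hypotheses and then to identify the resulting equilibrium measure. First I would verify that $(h,\nu,\nabla)$ is kinetic admissible for the Kac--Hammersley data, where $e^{-\phi}=1$, $g$ is the flat metric, and $\nu=\delta_{S^1}$. Admissibility of $\delta_{S^1}$ is recorded in \cite{ZZ}, and the weighted $L^2$ Bernstein inequality (\ref{crucialt}) follows from the one-line computation displayed above: writing $s=\sum_{j=0}^N a_j z^j$ gives $\|\nabla s\|^2_{\delta_{S^1}}=\sum_{j=1}^N j^2|a_j|^2\le N^2\sum_{j=0}^N|a_j|^2=N^2\|s\|^2_{\delta_{S^1}}$, so (\ref{BERN}) holds with $k=2$. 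Hence $(h,\nu,\nabla)$ is kinetic admissible for every semi-bounded potential $P$.

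With the hypotheses in hand, I would invoke Theorem~\ref{POTENTIALKINETIC} to conclude that $\{\PR_N\}$ satisfies an LDP with speed $N^2$ and rate functional $I^{h,K}$, precisely the functional of the associated Gaussian ensemble. Corollary~\ref{EQDIST} then yields $\E_N(Z_s)\to\nu_{h,K}$, the unique minimizer of $I^{h,K}$, which is the Green's equilibrium measure of $K=\supp\nu=S^1$ with respect to the flat weight $h$.

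The final step is to identify $\nu_{h,K}$ with $\delta_{S^1}$. The key point is that the rate functional $I^{h,K}$ appearing in Theorem~\ref{POTENTIALKINETIC} is literally the functional (\ref{IGREEN}) of the associated Gaussian ensemble, so its unique minimizer $\nu_{h,K}$ is the same object in both cases. For the classical Gaussian Kac--Hammersley ensemble it is already known that $\E(Z_s)\to\delta_{S^1}$, and since that limit must be the unique minimizer $\nu_{h,K}$, we get $\nu_{h,K}=\delta_{S^1}$; this immediately gives the claim. Alternatively one can check directly that the uniform measure on $S^1$ solves the Green's equilibrium problem for $K=S^1$ with the flat weight, which gives the same conclusion. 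There is no real obstacle to overcome in this example: the one nontrivial point --- that neither the kinetic term nor the non-quadratic interaction shifts the minimizer --- is exactly what the example is meant to illustrate, and it is supplied by the reduction in Theorem~\ref{POTENTIALKINETIC}, which makes the interaction a lower-order term and leaves the Gaussian equilibrium measure $\delta_{S^1}$ unchanged.
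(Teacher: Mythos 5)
Your proposal is correct and follows essentially the same route as the paper: you verify that $\delta_{S^1}$ is admissible and that the Bernstein bound (\ref{crucialt}) holds via the explicit computation $\|\nabla s\|^2_{\delta_{S^1}}=\sum_j j^2|a_j|^2\le N^2\|s\|^2_{\delta_{S^1}}$, then invoke Theorem~\ref{POTENTIALKINETIC} and Corollary~\ref{EQDIST} and identify the minimizer with the uniform measure on $S^1$. The only difference is that you spell out the identification $\nu_{h,K}=\delta_{S^1}$ (via the classical Gaussian Kac--Hammersley limit, or directly by symmetry of the equilibrium problem), which the paper leaves implicit.
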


\section{Proof of the Theorem \ref{POTENTIAL}}\label{without}

In this section, we drop  the kinetic  term $\|\nabla s\|_{(h^N
\otimes g,\nu)}^2 $ and only consider actions of the form $\int
P(|s|^2_{h^N}) d\nu$.  We assume that $c_{k} > 0$ and with no
essential loss of generality we put $c_{k} = 1$ (the coefficient
could be re-scaled in the calculation). The following calculation
generalizes Proposition 3 of \cite{ZZ}.

\begin{prop} \label{FSVOLZETA2intro} Let $(\pcal_N, \gamma_N)$ be
the $P(\phi)_2$ ensemble with $S(s) = \int_{\CP^1} P(|s|_{h^N})
d\nu$, where $d\nu$ is an admissible measure. Denote by $k$ the
maximal non-zero power occurring in $P$ (\ref{mnS}).  Let $\vec
K^N$ be the joint probability current (\ref{JPCDEF}). Then,
\begin{eqnarray}
    \label{eq-030209ba}
    \vec K^N(\zeta_1, \dots, \zeta_N) & = & \frac{(\Gamma_N(\zeta_1, \dots \zeta_N))}{Z_N(h)}
\frac{|\Delta(\zeta_1, \dots, \zeta_N)|^2  d^2 \zeta_1 \cdots d^2
\zeta_N}{\left(\int_{\CP^1} \prod_{j = 1}^N |(z - \zeta_j)|^{2k}
e^{-k N \phi(z)}  d\nu(z) \right)^{\frac{N+1}{k}}}
 \\
\label{eq-030209d} & = & \frac{(\Gamma_N(\zeta_1, \dots
\zeta_N))}{\hat{Z}_N(h)} \frac{\exp \left( \sum_{i < j}
G_{h}(\zeta_i, \zeta_j) \right) \prod_{j = 1}^N e^{- 2 N
\phi(\zeta_j)} d^2 \zeta_j  }{\left(\int_{\CP^1} e^{k N
\int_{\CP^1} G_{h}(z,w) d\mu_{\zeta}} d\nu(z)
\right)^{\frac{N+1}{k}}}.
\end{eqnarray}
where $$\sup_{\{\zeta_1, \dots, \zeta_N\} \in (\CP^1)^{(N)}}
\frac{1}{N^2} \log \Gamma_N (\zeta_1, \dots, \zeta_N) \to 0$$ and
where $ Z_N(h)$, resp. $\hat{Z}_N(h)$, is the normalizing constant
in Proposition 3 of \cite{ZZ}.
\end{prop}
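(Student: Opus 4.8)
The plan is to compute the joint probability current by pushing forward the Gaussian-like density $e^{-S(s)}\,ds$ under the zeros map, exactly paralleling Proposition 3 of \cite{ZZ} but tracking the effect of replacing the quadratic action by $P(|s|^2_{h^N})$. The starting point is that a degree-$N$ polynomial is determined up to a scalar by its zeros $\{\zeta_1,\dots,\zeta_N\}$, so I would write $s = c\prod_{j=1}^N(z-\zeta_j)\, e^N$ and change variables from the coefficients $(a_0,\dots,a_N)$ to the pair $(c,\zeta_1,\dots,\zeta_N)$, where $c$ is the leading coefficient and the $\zeta_j$ are the roots. The Jacobian of this classical map (coefficients to leading coefficient and roots) is the Vandermonde factor $|\Delta(\zeta_1,\dots,\zeta_N)|^2$, which is where the $|\Delta|^2$ term in \eqref{eq-030209ba} originates; this is standard and identical to the Gaussian case, so I would simply cite \cite{ZZ}.

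The key new computation is the integration over the leading coefficient $c$. First I would note that under the substitution the pointwise norm becomes $|s(z)|^2_{h^N} = |c|^2 \prod_j |z-\zeta_j|^2 e^{-N\phi(z)}$, so the action takes the form $S(s) = \sum_{j=1}^k c_j |c|^{2j} M_j(\zeta)$ where $M_j(\zeta) = \int_{\CP^1}\bigl(\prod_i |z-\zeta_i|^2 e^{-N\phi(z)}\bigr)^j\,d\nu(z)$. The remaining task is to evaluate $\int_{\C} e^{-\sum_j c_j |c|^{2j} M_j}\,d^2c$. Writing $r = |c|^2$ and integrating out the angle, this is a one-dimensional integral of the shape $\pi\int_0^\infty e^{-\sum_j c_j M_j r^j}\,dr$. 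Because $c_k = 1$ and the leading term $M_k r^k$ dominates, a rescaling $r = M_k^{-1/k} u$ extracts the factor $M_k^{-1/k}$ raised to the power coming from the overall normalization; carrying the $N$ copies (one per zero, up to the $N+1$ bookkeeping) through gives the exponent $\tfrac{N+1}{k}$ on $M_k = \int_{\CP^1}\prod_j|z-\zeta_j|^{2k}e^{-kN\phi}\,d\nu$ in the denominator of \eqref{eq-030209ba}. The residual integral $\Gamma_N(\zeta) := \pi\int_0^\infty e^{-\sum_j c_j u^j \,(M_j/M_k^{j/k})}\,du$, after rescaling, is a function of the ratios $M_j/M_k^{j/k}$; this is precisely the factor that replaces the clean Gamma-function constant of the Gaussian case.

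The main obstacle — and the one genuinely new estimate — is verifying the subexponential bound $\sup_{\zeta}\frac{1}{N^2}\log\Gamma_N(\zeta)\to 0$. I would establish this by bounding $\Gamma_N(\zeta)$ above and below uniformly in $\zeta$. For the upper bound, since $P$ is semi-bounded with $c_k=1$, one has $\sum_j c_j u^j \ge -C_0 + \tfrac12 u^k$ for a constant $C_0$ depending only on the coefficients, so $\Gamma_N(\zeta)\le \pi e^{C_0}\int_0^\infty e^{-\frac12 u^k (M_k/M_k)}\,du$ is bounded by an absolute constant; the lower bound comes from restricting the integral to a fixed interval where the integrand is bounded below, again uniformly in $\zeta$ because only the dimensionless ratios enter after rescaling. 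The point is that $\Gamma_N$ is squeezed between two constants \emph{independent of $N$}, so its logarithm is $O(1)$ and divided by $N^2$ tends to zero — which is exactly why the non-quadratic potential does not alter the speed or rate function. Finally, passing from \eqref{eq-030209ba} to \eqref{eq-030209d} is the same rewriting as in \cite{ZZ}: one uses $\log\prod_j|z-\zeta_j|^2 - N\phi$ in terms of the Green's function $G_h$ and the empirical measure $d\mu_\zeta$, converts $|\Delta|^2 e^{-2N\sum\phi(\zeta_j)}$ into $\exp(\sum_{i<j}G_h(\zeta_i,\zeta_j))\prod_j e^{-2N\phi(\zeta_j)}$, and absorbs the difference of normalizing constants into $\hat Z_N(h)$; I would simply invoke the corresponding identities from \cite{ZZ}.
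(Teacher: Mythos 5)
Your outline follows the same architecture as the paper's proof (push forward to zeros coordinates, pick up the Vandermonde, integrate out the leading coefficient, rescale by $\alpha_k^{1/k}$, show the residual factor is negligible at speed $N^2$), but there is a genuine error at the central step. The Jacobian of the change of variables $(a_0,a_1,\dots,a_N)\mapsto(a_0,\zeta_1,\dots,\zeta_N)$ is $|a_0|^{2N}\,|\Delta(\zeta_1,\dots,\zeta_N)|^2$, not $|\Delta|^2$ alone: each coefficient $a_{N-j}=a_0\,e_{N-j}(\zeta)$ carries a factor of $a_0$, so the pushed-forward density is $\left(\int \dcal(a_0;\zeta)\,|a_0|^{2N}\,d^2a_0\right)|\Delta|^2\,d^2\zeta_1\cdots d^2\zeta_N$ as in (\ref{JPCa}). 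Consequently the integral over the leading coefficient is $\pi\int_0^\infty e^{-\sum_j c_j M_j\rho^j}\,\rho^N\,d\rho$, with a factor $\rho^N$ that your one-dimensional integral omits. This factor is not bookkeeping: it is the only source of the exponent $\frac{N+1}{k}$ in (\ref{eq-030209ba}), since the rescaling $\rho\to M_k^{-1/k}\rho$ applied to $\rho^N d\rho$ produces $M_k^{-(N+1)/k}$, whereas your integral would only produce $M_k^{-1/k}$. Your sentence about ``carrying the $N$ copies through'' asserts the right exponent but your derivation cannot yield it.

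The omission then falsifies your key estimate. With the $\rho^N$ restored, $\Gamma_N(\zeta)=\int_0^\infty e^{-(\rho^k+\beta_{k-1}c_{k-1}\rho^{k-1}+\cdots+\beta_1c_1\rho)}\rho^N\,d\rho$ is \emph{not} squeezed between constants independent of $N$: already in the model case $\int_0^\infty e^{-\rho^k/2}\rho^N d\rho \sim N^{(N+1)/k}e^{-cN}N^{-1/2}$, so $\log\Gamma_N\asymp \frac{N}{k}\log N\to\infty$. The statement that is true, uniform in $\zeta$, and sufficient — and the one the paper proves — is the two-sided bound $-CN\le \log\Gamma_N\le \frac{N+1}{k}\log N+O(N)$, which is $o(N^2)$ and hence invisible at speed $N^2$; your $O(1)$ claim cannot be repaired into anything stronger. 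Moreover, making either bound uniform in $\zeta$ requires controlling the rescaled coefficients $\beta_j=\alpha_j/\alpha_k^{j/k}$, which still depend on $\zeta$; the paper gets $\beta_j\le 1$ from H\"older's inequality with exponent $k/j$ together with the fact that $\nu$ is a probability measure (alternatively one can argue pointwise from semi-boundedness, $P(x)\ge \tfrac12 x^k-C_0$, integrated against $d\nu$). Appealing to the ratios being ``dimensionless'' does not by itself give this. With the Jacobian factor restored, the H\"older (or pointwise) bound supplied, and the false $O(1)$ claim replaced by the two-sided $o(N^2)$ estimate, your argument becomes the paper's proof.
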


We note that (\ref{eq-030209ba}) (resp. (\ref{eq-030209d})) is
almost the same as (23) (resp. (24))  in Proposition 3 of
\cite{ZZ} except that we raise $||s||_{h^N}$ to the power $k$
instead of the power $k = 2$. It is shown in \cite{ZZ} that
$\frac{1}{\hat{Z}_N} = e^{-\frac{1}{2}N(N-1)+N(N+1))E(h)}. $ The
existence of such an explicit JPC in the general $P(\phi)_2$ case
is the reason why it is possible to prove Theorem \ref{POTENTIAL}.

\begin{proof}

We coordinatize $\pcal_N$ using the  basis $z^j$ and put
$$s = a_0 \prod_{j = 1}^N (z - \zeta_j) = \sum_{j = 0}^N a_{N -j} z^j. $$
Any smooth probability measure on $\pcal_N$ thus has a density
$\dcal(a_0, \dots, a_N) \prod_{j = 0}^N d^2 a_j, $ where $d^2 a =
da \wedge d \bar{a}$ is Lebesgue measure.

As in \cite{ZZ}, the first step is to push this measure  forward
under the natural projection from $\pcal_N$ to the projective
space $\PP \pcal_N$ of polynomials, whose points consists of lines
$\C s$ of polynomials. This is natural since $Z_s$ is the same for
all multiples of $s$. Monic polynomials with $a_0 = 1$ form an
affine space of $\PP \pcal_N$. As affine coordinates on $\PP
\pcal_N$ we use $[1:b_1:\cdots :b_N]$ with $b_j=a_j/a_0$.

We then change variables from the affine coordinates $b_j$ to the
zeros coordinates $\zeta_k$. Since $a_{N - j} = e_{N - j}(\zeta_1,
\dots, \zeta_N)$ (the $(N - j)$th elementary symmetric
polynomial), the pushed forward  probability measure on $\PP
\pcal_N$ then  has the form \begin{equation} \label{JPCa}  \vec
K^N(\zeta_1, \dots, \zeta_N) =  \left(\int \dcal(a_0; \zeta_1,
\dots, \zeta_N) |a_0|^{2 N} d^2a_0 \right) \times |\Delta(\zeta_1,
\dots, \zeta_N)|^2 d^2\zeta_1 \cdots d^2\zeta_N, \end{equation}
where $\dcal(a_0; \zeta_1, \dots, \zeta_N)$ is the density of the
JPC in the coordinates $(a_0, \dots, a_N)$ followed by the change
of coordinates, and $\Delta(\zeta_1, \dots, \zeta_N) = \prod_{i <
j} (\zeta_i - \zeta_j)$ is the Vandermonde determinant. We refer
to \cite{ZZ} (proof of Proposition 3) for further details.

For the $P(\phi)_2$ measures (\ref{GL}) without a kinetic term,
\begin{equation} \label{JPCb}  \dcal(a_0; \zeta_1, \dots, \zeta_N) =
e^{- \int_{\CP^1}  P (|a_0|^2 | \prod_{j = 1}^N (z -
\zeta_j)|^2_{h^N}) d\nu(z)}. \end{equation} Put
\begin{equation} \label{alphaj} \alpha_i(\zeta_1 \dots, \zeta_N) : = \alpha_i=\int_{\cp}|\prod_{j=1}^N(z-\zeta_j)|^{2i}_{h^N}d\nu(z).
\end{equation}  Then
\begin{equation} \label{DCAL}  \dcal(a_0; \zeta_1, \dots, \zeta_N) =
e^{-(\alpha_k|a_0|^{2k}+\alpha_{k-1}c_{k - 1} |a_0|^{2k-2}+\cdots
+ \alpha_1 c_1  |a_0|^2)},  \end{equation} and the pushed-forward
density is
\begin{equation} \label{DCAL} \begin{array}{l}  \int \dcal(a_0; \zeta_1,
\dots, \zeta_N) |a_0|^{2 N} d^2a_0  \\ \\=
\int_{\mathbb{C}}e^{-(\alpha_k|a_0|^{2k}+\alpha_{k-1}c_{k - 1}
|a_0|^{2k-2}+\cdots + \alpha_1 c_1  |a_0|^2)}|a_0|^{2N}da_0 \wedge
d \bar a_0. \end{array} \end{equation}  We change variables to
$\rho = |a_0|^2 \to \alpha_k^{-\frac{1}{k}} \rho$ to get
\begin{equation} \label{alphak} \int_0^{\infty}e^{-(\alpha_k\rho^{k}+\alpha_{k-1} c_{k-1}\rho^{k-1}+\cdots
+ \alpha_1 c_1 \rho)} \rho^{N}d\rho=(\alpha_k)^{\frac{N+1}{k}}
\Gamma_N,
\end{equation} where
 \begin{equation} \label{GAMMADEF} \Gamma_N(\zeta_1, \dots,
\zeta_N): = \int_0^{\infty}e^{-(\rho^{k}+\beta_{k-1} c_{k-1}
\rho^{k-1}+\cdots + \beta_1 c_1 \rho)}\rho^{N}d\rho.
\end{equation} with
$\beta_i=\frac{\alpha_i}{\alpha_k^{\frac{i}{k}}}. $ We observe
that
\begin{equation} \label{alphajk} (\alpha_k)^{\frac{N+1}{k}} = \left(\int_{\cp}|\prod_{j=1}^N(z- \zeta_j)|^{2k}_{h^N}d\nu(z)
\right)^{\frac{N + 1}{k}},
\end{equation}
so that (\ref{alphak}) implies the identity (\ref{eq-030209ba}).
The identity (\ref{eq-030209d}) is derived from
(\ref{eq-030209ba}) exactly as in Proposition 3 of \cite{ZZ}, so
we refer there for the details.

To complete the proof of the Proposition, we prove the key

\begin{lem} \label{BOUNDS} We have,

$$\sup_{\{\zeta_1, \dots, \zeta_N\} \in (\CP^1)^{(N)}} \frac{1}{N^2}\log  \Gamma_N(\zeta_1, \dots, \zeta_N) \rightarrow 0$$ \end{lem}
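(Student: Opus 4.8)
The plan is to show that $\log \Gamma_N$ is of order $N\log N$ \emph{uniformly} in the zeros $\{\zeta_1,\dots,\zeta_N\}$, which is $o(N^2)$ and therefore yields the claim after dividing by $N^2$. The essential point is that, although the lower-order coefficients of the exponent in (\ref{GAMMADEF}) depend on the configuration through the $\beta_i=\alpha_i/\alpha_k^{i/k}$, these coefficients are bounded independently of $\zeta$. Writing $F(z)=|\prod_{j=1}^N(z-\zeta_j)|^2_{h^N}$ so that $\alpha_i=\int_{\cp} F^i\,d\nu$, Hölder's inequality with exponents $k/i$ and $k/(k-i)$, together with the fact that $\nu$ is a probability measure, gives $\alpha_i\le \alpha_k^{i/k}$, that is $0\le \beta_i\le 1$ for all $1\le i\le k-1$. (Here $\alpha_k>0$ since the degree-$N$ polynomial $\prod(z-\zeta_j)$ has finitely many zeros and so cannot vanish $\nu$-a.e.\ on the regular support $K$.) Consequently the exponent $Q(\rho):=\rho^k+\sum_{i=1}^{k-1}\beta_i c_i\rho^i$ has leading coefficient $1$ and every remaining coefficient bounded in absolute value by $|c_i|$, uniformly in $\zeta$.

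For the upper bound I would fix $R_0=R_0(c_1,\dots,c_{k-1},k)$ so large that $\sum_{i<k}|c_i|\rho^i\le\frac12\rho^k$ for $\rho\ge R_0$; then $Q(\rho)\ge\frac12\rho^k$ on $[R_0,\infty)$, so that
$$\int_{R_0}^\infty e^{-Q(\rho)}\rho^N\,d\rho\le \int_0^\infty e^{-\frac12\rho^k}\rho^N\,d\rho=\frac{2^{(N+1)/k}}{k}\,\Gamma\!\Big(\tfrac{N+1}{k}\Big),$$
whose logarithm is $O(N\log N)$ by Stirling. On $[0,R_0]$ one has $Q(\rho)\ge -\sum_{i<k}|c_i|R_0^i=:-M$, so that piece is at most $e^{M}R_0^{N+1}/(N+1)=e^{O(N)}$. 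Adding the two contributions gives $\Gamma_N\le e^{O(N\log N)}$, hence $\limsup_N \frac{1}{N^2}\log\Gamma_N\le 0$ uniformly in $\zeta$.

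For the lower bound I would use $Q(\rho)\le\rho^k+\sum_{i<k}|c_i|\rho^i$ and restrict the integral to the unit interval $[\rho_*,\rho_*+1]$ with $\rho_*:=N^{1/k}$. There $\rho^N\ge\rho_*^N=e^{(N/k)\log N}$, while $\rho\le 2\rho_*$ forces $Q(\rho)\le (2\rho_*)^k\big(1+\sum|c_i|\big)=O(N)$ for large $N$; thus the integrand is bounded below by $e^{(N/k)\log N-O(N)}$ and $\Gamma_N\ge e^{(N/k)\log N-O(N)}=e^{O(N\log N)}$, giving $\liminf_N\frac1{N^2}\log\Gamma_N\ge 0$ uniformly (the case $k=1$ is immediate, since then $\Gamma_N=N!$). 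Combining the two estimates yields $\frac1{N^2}\log\Gamma_N\to 0$ uniformly in $\{\zeta_1,\dots,\zeta_N\}$, as asserted. The only delicate step is the uniformity over configurations, and this is supplied entirely by the scale-invariant normalization $\beta_i=\alpha_i/\alpha_k^{i/k}$ together with Hölder, which bounds the $\zeta$-dependent coefficients once and for all; every subsequent estimate is a one-dimensional Laplace/$\Gamma$-integral computation with $\zeta$-independent constants. The main (and fairly mild) obstacle is thus recognizing that Hölder normalizes away the configuration dependence, after which the problem decouples from the geometry altogether.
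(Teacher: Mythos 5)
Your proof is correct and takes essentially the same route as the paper's: the H\"older bound $\beta_i \le 1$ is exactly the paper's key normalization step, and your upper bound (split at a $\zeta$-independent threshold, dominate the tail by $e^{-\rho^k/2}$, bound the compact piece trivially) is the paper's argument almost verbatim. Your lower bound localizes the integral near the Laplace point $\rho_* = N^{1/k}$ rather than splitting into $[0,1]$ and $[1,\infty)$ as the paper does, but this is only a minor variant of the same elementary estimate and, like the paper's, is uniform in the configuration.
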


\begin{proof}

By the H\"older inequality with exponent $\frac{k}{i}$,  $\beta_i
\leq (\int_{\cp}d\nu)^{1-\frac{i}{k}}= 1$, hence $\beta_i$ is
bounded independent of $N$ for any polynomial $s$ or roots
$\{\zeta_1, \dots, \zeta_N\}$.

 We first note that
 $$\begin{array}{lll} \rho^{k}+\beta_{k-1} c_{k-1} \rho^{k-1}+\cdots + \beta_1 c_1 \rho &\geq & \rho^{k} - | c_{k-1}| \rho^{k-1} - \dots - |c_1|  \rho
 \\ && \\
  & \geq & \frac{1}{2} \rho^k, \;\;\;\mbox{for}\;\; \rho \geq \rho_k : =  \rho_k(c_1, \dots, c_{k-1}), \end{array} $$
  where $\frac{|c_1|}{\rho} + \cdots + \frac{|c_1|}{\rho^{k-1}}
  \leq \half$ for $\rho \geq \rho_k$.
It follows that
$$\begin{array}{lll} \Gamma_N(\zeta_1, \dots, \zeta_N)  & \leq & \int_0^{\rho_k}  e^{-(\rho^{k}+\beta_{k-1} c_{k-1}
\rho^{k-1}+\cdots + \beta_1 c_1 \rho)}\rho^{N}d\rho +
\int_{\rho_k}^{\infty} e^{- \half \rho^k} \rho^N d \rho \\ && \\
& \leq &   \int_0^{\rho_k}  e^{-(\rho^{k} -
|c_{k-1}|\rho^{k-1}-\cdots - | c_1| \rho)} \rho^{N}d\rho+
\int_{0}^{\infty} e^{- \half \rho^k} \rho^N d \rho.
\end{array}
$$
But
$$
\int_0^{\infty}e^{- \half \rho^{k}}\rho^{N}d\rho
=N^{\frac{N+1}{k}} \int_0^\infty e^{N(\log \rho- \half
\rho^k)}d\rho \sim N^{\frac{N+1}{k}} e^{N(\frac{1}{k}\log
\frac{1}{k}-\frac{1}{k})}\frac{1}{\sqrt{N}}.$$ Also,
$$\int_0^{\rho_k} e^{-(\rho^{k} -  |c_{k-1}|\rho^{k-1}- \cdots
- | c_1 | \rho)} \rho^{N}  d \rho \leq (\rho_k)^N C_k,$$ where
$C_k$ is a constant independent of $N$ and $\{\zeta_1, \dots,
 \zeta_N\}$.
 Hence,
 $$\Gamma_N \leq (\rho_k)^N C_k + N^{\frac{N+1}{k}} e^{N(\frac{1}{k}\log
\frac{1}{k}-\frac{1}{k})}\frac{1}{\sqrt{N}}.$$

To obtain a lower bound, we write
$$\int_0^{\infty}e^{-(\rho^{k}+\beta_{k-1} c_{k-1} \rho^{k-1}+\cdots +
\beta_1 c_1 \rho)}\rho^{N}d\rho= \int_0^{1} +\int_1^{\infty}$$ For
$\rho \in [0, 1]$ we have,
$$\rho^{k}+\beta_{k-1} |c_{k-1}| \rho^{k-1}+\cdots + \beta_1 |c_1| \rho \leq k C, \;\;\; C = \max\{|c_j|\}_{j = 1}^k$$
since each $\beta_i$ is bounded by $1$, thus
$$\int_0^{1}e^{-(\rho^{k}+\beta_{k-1} c_{k-1} \rho^{k-1}+\cdots + \beta_1 c_1 \rho)}\rho^{N}d\rho \geq \int_0^{1} e^{- C k}\rho^{N}d\rho \geq  e^{- Ck}\frac{1}{N+1}$$
For $\rho \geq 1$ we have,
$$\rho^{k}+\beta_{k-1} c_{k-1} \rho^{k-1}+\cdots + \beta_1 c_1 \rho \leq
k C  \rho^{k},$$ hence
$$\begin{array}{l} \int_1^{\infty}e^{-(\rho^{k}+\beta_{k-1} c_{k-1} \rho^{k-1}+\cdots +
\beta_1 c_1 \rho)}\rho^{N}d\rho \geq \int_1^{\infty}e^{- C
k\rho^k}\rho^Nd\rho\\ \\ = (Ck)^{- (N + 1)/k}  \int_1^{\infty}e^{-
\rho^k} \rho^N d\rho  \geq (Ck)^{- (N + 1)/k}.
\end{array} $$ Putting
together the two bounds, we get
$$(Ck)^{- (N + 1)/k} +e^{-k}\frac{1}{N+1} \leq \Gamma_N
 \leq  (\rho_k)^N C_k + N^{\frac{N+1}{k}} e^{N(\frac{1}{k}\log
\frac{1}{k}-\frac{1}{k})}\frac{1}{\sqrt{N}}.$$

This completes the proof of Lemma \ref{BOUNDS}, and hence of the
Proposition.
\end{proof}

\begin{rem} In retrospect, what we proved is that
\begin{equation} \label{DCALab} \frac{1}{N^2} \log \int \dcal(a_0; \zeta_1,
\dots, \zeta_N) |a_0|^{2 N} d^2a_0  \sim \frac{1}{N^2} \log
\int_{0}^{\infty} e^{-\alpha_k \rho^{k}} \rho^{N} d \rho
\end{equation}

We could obtain the limit by a slight generalization of the saddle
point method,
\begin{equation} \begin{array}{lll}  \frac{1}{N^2} \log
\int_{0}^{\infty} e^{-\alpha_k \rho^{k}} \rho^{N} d \rho & \sim &
-\frac{1}{N^2} \inf_{\rho \in \R_+} (\alpha_k \rho^{k} - N \log
\rho) \\ && \\ & \sim &  -\frac{1}{k N}  \log \alpha_N = -
\frac{1}{k N} \log
\int_{\cp}|\prod_{j=1}^N(z-\zeta_j)|^{2k}_{h^N}d\nu,
\end{array} \end{equation} since the minimum occurs at $\rho_N =
(\frac{N}{k})^{\frac{1}{k}} \alpha^{- \frac{1}{k}}. $ This is the
same answer we are about to get by the more rigorous argument in
\cite{ZZ}.

\end{rem}

\subsection{\label{COMPLETE} Completion of the proof of Theorem \ref{POTENTIAL} without kinetic term }

We now modify the calculations of \cite{ZZ}, Section 4.7, of the
approximate rate function $I_N$. As in that section,  we define
$$\mathcal{E}_N^h(\mu_{\zeta})=\int_{\cp \times \cp \backslash
\Delta}G_h(z,w)d\mu_{\zeta}(z)d\mu_{\zeta} (w),$$ where $\Delta
\subset \cp \times \cp$ is the diagonal.  We also define
\begin{equation} \label{JCAL} {\mathcal{J}_N^{h,\nu}(\mu_{\zeta})=\log
\|e^{U_h^{\mu_{\zeta}}}\|_{L^{kN}(\nu)}}. \end{equation} It is
almost the same functional of the same notation in  \cite{ZZ},
Section 4.7, except that the  $L^N$ norm there now becomes the
$L^{kN}$ norm.

We define the approximate rate functional by  \begin{equation}
\label{IN} - N^2 I_N(\mu_{\zeta}) : =
-\frac{1}{2}\mathcal{E}_N^h(\mu_{\zeta})+\frac{N+1}{N}\mathcal{J}_N^{h,\nu}(\mu_{\zeta}).
\end{equation}

The following is the analogue of Lemma 18 of \cite{ZZ}.

\begin{prop}\label{MAIN1}  With the same notation as in
Proposition \ref{FSVOLZETA2intro}, we have
$$\vec K^N(\zeta_1, \dots, \zeta_N) = \frac{\Gamma_N(\zeta_1,
\dots, \zeta_N)}{\hat{Z}_N(h)} e^{- N^2
\left(-\frac{1}{2}\mathcal{E}_N^h(\mu_{\zeta})+\frac{N+1}{N}\mathcal{J}_N^{h,\nu}(\mu_{\zeta}))\right)}.
$$
\end{prop}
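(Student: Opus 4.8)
The plan is to obtain Proposition \ref{MAIN1} as a purely algebraic rewriting of identity (\ref{eq-030209d}) of Proposition \ref{FSVOLZETA2intro}, following verbatim the passage from equation (24) to Lemma 18 in \cite{ZZ}, the only change being that the exponent $k=2$ there is replaced by the degree $k$ of $P$. The prefactor $\Gamma_N/\hat{Z}_N(h)$ is carried through untouched, so the entire content is to recognize the numerator of (\ref{eq-030209d}) as $e^{\frac{N^2}{2}\mathcal{E}_N^h(\mu_\zeta)}$ against a fixed background measure, and its denominator as $e^{N(N+1)\mathcal{J}_N^{h,\nu}(\mu_\zeta)}$.

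First I would rewrite the numerator. Since $\mu_\zeta=\frac1N\sum_j\delta_{\zeta_j}$, the definition of $\mathcal{E}_N^h$ as the integral of $G_h$ over $\cp\times\cp\setminus\Delta$ against $\mu_\zeta\otimes\mu_\zeta$ gives directly
$$\mathcal{E}_N^h(\mu_\zeta)=\frac{1}{N^2}\sum_{i\neq j}G_h(\zeta_i,\zeta_j)=\frac{2}{N^2}\sum_{i<j}G_h(\zeta_i,\zeta_j),$$
so that $\exp\big(\sum_{i<j}G_h(\zeta_i,\zeta_j)\big)=\exp\big(\frac{N^2}{2}\mathcal{E}_N^h(\mu_\zeta)\big)$, which is exactly the contribution $-N^2\cdot(-\frac12\mathcal{E}_N^h)$ of the claimed exponent. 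The remaining weight $\prod_j e^{-2N\phi(\zeta_j)}\,d^2\zeta_j$ appearing in (\ref{eq-030209d}) is the background volume measure on configuration space; the identity in Proposition \ref{MAIN1} is to be read relative to it, exactly as in Lemma 18 of \cite{ZZ}, and it contributes nothing new to the rate functional.

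Next I would treat the denominator. Writing the Green's potential $U_h^{\mu_\zeta}(z)=\int_{\cp}G_h(z,w)\,d\mu_\zeta(w)$, the inner integral in the denominator of (\ref{eq-030209d}) is
$$\int_{\cp}e^{kN\,U_h^{\mu_\zeta}(z)}\,d\nu(z)=\big\|e^{U_h^{\mu_\zeta}}\big\|_{L^{kN}(\nu)}^{kN},$$
so that by the definition (\ref{JCAL}) one has $\log\int_{\cp}e^{kN\,U_h^{\mu_\zeta}}\,d\nu=kN\,\mathcal{J}_N^{h,\nu}(\mu_\zeta)$. Raising to the power $\frac{N+1}{k}$ and taking the reciprocal converts the denominator into
$$\exp\big(-\tfrac{N+1}{k}\cdot kN\,\mathcal{J}_N^{h,\nu}\big)=\exp\big(-N(N+1)\mathcal{J}_N^{h,\nu}\big)=\exp\big(-N^2\cdot\tfrac{N+1}{N}\mathcal{J}_N^{h,\nu}\big),$$
which is precisely the contribution $-N^2\cdot\frac{N+1}{N}\mathcal{J}_N^{h,\nu}$ of the claimed exponent. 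Combining this with the numerator computation and the untouched prefactor $\Gamma_N/\hat{Z}_N(h)$ yields the stated formula.

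The step requiring the most care is the bookkeeping in the denominator: one must check that the power $\frac{N+1}{k}$ inherited from (\ref{eq-030209d}) cancels the factor $k$ in the $L^{kN}$ norm so as to produce exactly the coefficient $\frac{N+1}{N}$, independently of $k$. This $k$-independence is the algebraic reason the $P(\phi)_2$ rate functional will coincide with the Gaussian one. Beyond this matching, and the convention (inherited from \cite{ZZ}) that the factor $\prod_j e^{-2N\phi(\zeta_j)}\,d^2\zeta_j$ is absorbed into the left-hand measure, I expect no genuine obstacle, since the argument is a substitution of the definitions of $\mathcal{E}_N^h$ and $\mathcal{J}_N^{h,\nu}$ into the already-established identity (\ref{eq-030209d}).
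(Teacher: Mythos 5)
Your proposal is correct and follows essentially the same route as the paper: Proposition \ref{MAIN1} is just the identity (\ref{eq-030209d}) with the definitions unwound, namely $\exp\bigl(\sum_{i<j}G_h(\zeta_i,\zeta_j)\bigr)=\exp\bigl(\tfrac{N^2}{2}\mathcal{E}_N^h(\mu_\zeta)\bigr)$ in the numerator and $\bigl(\int_{\CP^1}e^{kN U_h^{\mu_\zeta}}d\nu\bigr)^{\frac{N+1}{k}}=\exp\bigl(N(N+1)\mathcal{J}_N^{h,\nu}(\mu_\zeta)\bigr)$ in the denominator, with the prefactor $\Gamma_N/\hat{Z}_N(h)$ and the reference measure $\prod_j e^{-2N\phi(\zeta_j)}d^2\zeta_j$ carried along exactly as in Lemma 18 of \cite{ZZ}, to which the paper's own (mostly omitted) proof defers. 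Your observation that $\frac{N+1}{k}\cdot kN=N(N+1)=N^2\cdot\frac{N+1}{N}$ independently of $k$ is precisely the point the paper emphasizes when it says the only change from \cite{ZZ} is $1\to k$ inside $\mathcal{J}_N^{h,\nu}$, which then cancels in the limit.
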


The proof is the same calculation as in \cite{ZZ} and we therefore
omit most of the details. Indeed, the remainder of the proof of
Theorem \ref{POTENTIAL} for $P(\phi)_2$ measure without kinetic
term is identical to that of Theorem 1 of \cite{ZZ}, since the
only change in the approximate rate functional is the change $1
\to k$  in $\mathcal{J}_N^{h,\nu}$ and the factor $\Gamma_N$. The
change in $\jcal_N^{h, \nu}$  cancels out in the limit, since  (as
in \cite{ZZ}), we have
$$\lim _{N\rightarrow \infty}\mathcal{J}_N^{h,\nu}(\mu_{\zeta})=\log
\|e^{U_h^{\mu_{\zeta}}}\|_{L^{kN}}(\nu)\uparrow \log
\|e^{U_h^{\mu_{\zeta}}}\|_{L^{\infty}}(\nu)=\sup_K
U_h^{\mu_{\zeta}}.$$ We briefly re-do the calculation for the sake
of completeness, referring to \cite{ZZ} for further details:
\begin{equation} \begin{array}{lll}\int_{\CP^1} \prod_{j = 1}^N |(z -
\zeta_j)|^{2k} e^{-  k N \phi} d\nu(z) & = & \left( \int_{\CP^1}
e^{k \int_{\CP^1} G_{h}(z,w) dd^c \log ||s_{\zeta}(w)||_{h^N}^2}
d\nu \right) e^{ k \int_{\CP^1} \log
||s_{\zeta}||_{h^N}^2(z) \omega_h} \\ && \\
& = & \left( \int_{\CP^1} e^{k N \int_{\CP^1} G_{h}(z,w)
d\mu_{\zeta}(w)} d\nu \right) e^{ k \int_{\CP^1} \log
||s_{\zeta}||_{h^N}^2(z) \omega_h}.
\end{array} \end{equation}
The right side is then raised to the power $- \frac{N + 1}{k}$. If
we take $\frac{1}{N^2} \log$ of the result we get the supremum of
$\int_{\CP^1} G_{h}(z,w) d\mu_{\zeta}(w) $ on the support of
$d\nu$.

 Further, by
Proposition \ref{FSVOLZETA2intro} the $\Gamma_N$ factor does not
contribute to the rate function $I^{h, K}$. Therefore the special
case of Theorem \ref{POTENTIAL} for $P(\phi)_2$ measures where the
$\|\nabla s\|_{(h^N\otimes g,\nu)}^2$ term is omitted follows from Proposition
\ref{MAIN1} and from the proof of Theorem 1 in \cite{ZZ}.

\end{proof}

\section{\label{KINETIC} Large deviations for Lagrangians with  kinetic term.}

We now include the kinetic energy term. In order to define $\nabla
s$ we need to introduce a connection $\nabla: C^{\infty}(\CP^1,
\ocal(1)) \to C^{\infty} (\CP^1, \ocal(1) \otimes T^*)$. To define
the norm-square $||\nabla s||_{(H^N\otimes g,\nu)}^2$ we  introduce a metric $g$ on $\CP^1$
and  a Hermitian metric $H$ on $\ocal(1)$  to define $|\nabla
s|^2_{H^N \otimes g}$ pointwise and a measure $d\mu$ on $\CP^1$ to integrate
the result.
 The kinetic  term is independent of the potential term, and we
 could choose $H, \mu$ differently from $h, \nu$ in the potential
 term. But to avoid excessive technical complications, we choose
 the metrics and connections to be closely related to those in the
 potential term.

We first assume that  $h=e^{-\phi}$ is  a hermitian metric on
$\ocal(1)  \rightarrow \CP^1$ with positive $(1,1)$ curvature,
$\omega_h = \frac{i}{\pi} \ddbar \phi>0$. We then choose $\nabla$
to be the Chern connection of $h$. Thus, $\nabla s \in
C^{\infty}(\CP^1, \ocal(N) \otimes T^{*(1,0)})$ if $s \in
H^0(\CP^1, \ocal(N))$. We fix a local frame $e$ over $\C$ and
express holomorphic sections of $\ocal(N)$ as $s = p e^N$. The
connection 1-form is defined by $\nabla e = e \otimes \alpha$ and
in the case of the Chern connection for $h$ it is given by $\alpha
= h^{-1} \partial h = \partial \phi$. We further fix a smooth
Riemannian metric $g$ on $\CP^1$ (which could be $\omega_h$ but
need not be).

We  assume that the auxiliary probability measure $d\nu$ on
$\CP^1$ satisfies the following  $L^2$-condition: There exists $r \geq 0$
so that \begin{equation}\label{condition} \int_{\CP^1} |p|^2e^{-N\phi}\omega_h \leq C N^r\int_{\CP^1} |p|^2e^{-N\phi}d\nu,\end{equation}
   for all $p \in \pcal_N$. That is, the inner
   product defined by $(h^N, \omega_h)$  is polynomially bounded
   by the inner product defined by $(h^N, \nu)$. We say that $(h,
   \nu)$ is {\it kinetic admissible} if the data satisfies these
   conditions. The metrics $h$ and $g$ and the measure $\nu$ induce
inner products on $\Gamma(L^N \otimes T^{*(1, 0)})$ by
$$\langle s \otimes dz, s \otimes dz \rangle_{h^N \otimes g} = \int_{\CP^1} (s, s)_{h^N}
(dz, dz)_g\; d\nu. $$

Since $\nabla p e^N = e^N \otimes  \partial p + N p e^N \otimes
\alpha, $ the kinetic energy is given in the local coordinate as
\begin{equation}\label{kic}\begin{array}{lll} \int_{\CP^1} |\nabla s|^2_{h^N \otimes g}d\nu : & = & \int_{\C} (e^N \otimes  \partial p +
N p e^N \otimes \alpha, e^N \otimes  \partial p + N p e^N \otimes
\alpha)_{h^N \otimes g} d\nu \\ && \\
& = & \int_{\C} \left( | \partial p |^2_g  + N p (\alpha,
\partial p)_g + N \bar{p} (\partial p, \alpha)_g + N^2 |p|^2
|\alpha|^2_g \right) e^{- N \phi} d\nu
\end{array}\end{equation}

\subsection{Kinetic admissible $(h, \nu,\nabla)$}

We now show that some natural choices of $(h, \nu,\nabla)$ are kinetic
admissible.

We first observe that $\frac{1}{N} \nabla$ is a bounded operator
on $H^0(M, L^N)$ for any positive line bundle $L$ over the projective \kahler manifold $M$, when the inner
product is defined by a smooth volume form. This is an obvious
result of Toeplitz calculus but we provide a proof using the
Boutet de Monvel-Sj\"ostrand parametrix for the \szego kernel. It
is at this point that we need the assumption that $\omega_h > 0$.

\begin{lem} \label{volume} Assume $h = e^{-\phi}$ is a Hermitian metric on a holomorphic line bundle $L \rightarrow M$
over any compact projective \kahler manifold with $\omega_h=\partial \bar \partial \phi > 0$.  Assume
$d\nu$ is a smooth volume form and $g$ is a Riemannian metric over $M$. Then we have
$$\|\nabla s\|_{(h^N \otimes g,\nu)}^2\leq C(h,g,\nu) N^2 \|s\|^2_{(h^N,\nu)} $$
where $s$ is the holomorphic section of line bundle
$L^N$.\end{lem}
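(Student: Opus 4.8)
The plan is to recast the inequality as the uniform boundedness of the normalized covariant derivative $\frac{1}{N}\nabla$ on the holomorphic subspace, and to prove this by differentiating the reproducing kernel and controlling the result with the Boutet de Monvel--Sj\"ostrand parametrix. Write $m=\dim_{\C}M$ and let $\Pi_N$ be the orthogonal projection of $L^2(M,L^N)$, equipped with the inner product $\langle\cdot,\cdot\rangle_{(h^N,\nu)}$, onto $H^0(M,L^N)$. Since a holomorphic section obeys $\Pi_N s=s$, we have $\nabla s=\nabla\Pi_N s$, so it suffices to prove $\|\frac{1}{N}\nabla\Pi_N\|_{(h^N,\nu)\to(h^N\otimes g,\nu)}\le C$ uniformly in $N$; squaring this bound gives exactly $\|\nabla s\|^2_{(h^N\otimes g,\nu)}\le C^2N^2\|s\|^2_{(h^N,\nu)}$.

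First I would lift the problem to the unit circle bundle $X=\{v\in L^*:|v|_{h^*}=1\}$, which is a strictly pseudoconvex CR manifold precisely because $\omega_h>0$; this is where positivity of the curvature is used. Sections of $L^N$ correspond to equivariant functions of weight $N$ on $X$, the holomorphic ones to the CR functions, and $\Pi_N$ is the $N$-th Fourier coefficient of the Szeg\"o projector $\Pi$. The Boutet de Monvel--Sj\"ostrand parametrix then yields the near-diagonal scaling of the associated Bergman kernel $K_N(z,w)$, the Schwartz kernel of $\Pi_N$ downstairs, namely $|K_N(z,w)|_{h^N}\le CN^m e^{-cN\,\mathrm{dist}(z,w)^2}$ with $O(N^{-\infty})$ decay off the diagonal, the subleading amplitude absorbing the smooth density $d\nu$. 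The crucial observation is that $\nabla$ costs exactly one power of $N$: in a local frame the connection contributes a term $N\partial\phi$ and differentiation of the phase contributes a further $O(N)$ term, each with a bounded smooth coefficient, so that $\frac{1}{N}\nabla_z K_N(z,w)$ is again $O(N^m e^{-cN\,\mathrm{dist}(z,w)^2})$ times a bounded amplitude. (Depending on the sign conventions of the connection these two $O(N)$ contributions may partially cancel on the diagonal, which would only improve the estimate; I need only that the amplitude stays bounded.)

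With this kernel bound I would conclude by a Schur test against $d\nu$: writing $T=\frac{1}{N}\nabla\Pi_N$, one has $\sup_z\int_M|T(z,w)|\,d\nu(w)\le CN^m\int_M e^{-cN\,\mathrm{dist}(z,w)^2}\,d\nu(w)\le C'$, since the substitution $u=\sqrt{N}\,(\text{geodesic normal coordinates at }z)$ converts the Gaussian integral over the $2m$ real base directions into $N^{-m}$ times an $N$-independent constant, and the fixed metric $g$ contributes only a bounded factor; the $w$-symmetric estimate is identical. Schur's test then gives $\|T\|\le C$ uniformly, which proves the lemma. I expect the main obstacle to be the estimate for the differentiated kernel --- verifying that $\nabla$ raises the order by exactly one power of $N$ and that the normalized amplitude remains bounded --- together with checking that the parametrix, and thus the Gaussian near-diagonal profile, is valid for the general smooth volume form $d\nu$ and not only for the canonical volume $\omega_h^m/m!$. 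As a shortcut one may instead split $\|\nabla s\|^2_{(h^N\otimes g,\nu)}\le 2\|\partial p\|^2_{(h^N\otimes g,\nu)}+2N^2\big(\sup_M|\partial\phi|^2_g\big)\|s\|^2_{(h^N,\nu)}$, disposing of the connection term trivially and isolating the single genuine ingredient, the $L^2$ Bernstein inequality $\|\partial p\|_{(h^N\otimes g,\nu)}\le CN\|s\|_{(h^N,\nu)}$, which the same parametrix supplies.
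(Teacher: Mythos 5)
Your proposal is correct and follows essentially the same route as the paper: both reduce the claim to the uniform boundedness of $\frac{1}{N}\nabla\Pi_{N,\nu}$ on $H^0(M,L^N)$, differentiate the Boutet de Monvel--Sj\"ostrand/Bergman kernel parametrix to see that $\nabla$ costs exactly one factor of $N$ with a uniformly bounded amplitude $\Phi(z,w)$, and conclude by a Schur--Young test using the Gaussian near-diagonal decay $O(N^m e^{-cN\,d(z,w)^2})$ and rapid off-diagonal decay of the kernel. Your concern about the parametrix for a general smooth volume form $d\nu$ is exactly what the paper's citation of Berman--Berndtsson--Sj\"ostrand resolves, so no gap remains.
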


\begin{proof} Let  $\Pi_{N,\nu}: L^2(M,L^N)\rightarrow H^0(M,L^N)$ be the orthogonal projection
with respect to the inner product,$$\langle f_1\otimes e^N,f_2\otimes e^N \rangle=\int_M f_1\bar f_2 e^{-N\phi}d\nu,$$ Here in the local coordinate, we write
$s=fe^N$ as the section of the line bundle $L^N$.

 Let $ \Pi_{N,\nu}(z,w)$ be
its Schwartz kernel with respect to $d\nu$,
$$(\Pi_{N,\nu} s)(z)=\int_M \Pi_{N,\nu}(z,w)f(w) e^{-N\phi(w)}d\nu(w)$$
Then  Bergman kernel has the paramatrix \cite{BBS, BS}
$$\Pi_{N,\nu}(z,w)=e^{N\phi(z\cdot w)}A_{N}e^N(z)\otimes \bar
e^N(w)$$ where $A_N$ is a symbol of order $m = \dim M$ depending
on $h$ and $\nu$ and where $\phi(z\cdot w)$ is the almost-analytic
extension of $\phi(z)$. It follows that the Schwartz kernel of
$\frac{1}{N} \nabla \Pi_{N, \nu}$ has the local form,
$$\begin{array}{l} \frac{1}{N}\nabla\Pi_{N,\nu}(z,w) = \left((\frac{1}{N} \partial +\partial\phi
dz) e^{N\phi(z\cdot w)}A_{N}(z,w) \right)e^N(z) \otimes \bar{e}^N(w)\\
\\
=\left((\partial \phi+\partial_z \phi(z\cdot w) + \frac{1}{N}
\partial \log A_N) e^{N\phi(z\cdot w)}A_{N}\right) e^N(z) \otimes \bar{e}^N(w).
\end{array}$$ Put $\Phi(z,w):=\partial \phi+\partial_z \phi(z\cdot w)
+\partial \log A_N$. Denote by $\Phi \Pi_{N, \nu}$ the product of
$\Phi$ and the Schwartz kernel of $\Pi_{N, \nu}$. Then,
$$\|\frac{1}{N}\nabla s\|_{(h^N\otimes g,\nu)}^2=
\|\frac{1}{N}\nabla\Pi_{N,\nu} s\|_{(h^N\otimes g,\nu)}^2=
\|(\Phi\Pi_{N,\nu}) s\|_{(h^N\otimes g,\nu)}^2$$

We now claim that
$$\|(\Phi\Pi_N)s\|_{L^2(h^N\otimes g,\nu)}\leq C \|s\|_{L^2(h^N,\nu)}. $$
This follows from the  Schur-Young bound on the $L^2 \to L^2$
mapping norm of the integral operator $\Phi\Pi_N$,
\begin{equation} \label{SY} \|\Phi\Pi_N\| \leq C \sup_M \int_M
|\Pi_{N,\nu}(z,w)|d\nu(z), \end{equation} since for any metric $g$ on
$M$, $|\Phi|_{g}\leq C$ uniformly on $M$. To estimate the norm, we
use the following known estimates on the Bergman kernel (see
\cite{SZ4} for a similar estimate and for background): when
$d(z,w)\leq CN^{-\frac{1}{3}}$, we have
$$|\Pi_{N,\nu}(z,w)|_{h^N \otimes h^N}\leq C N^m
e^{-\frac{1}{4}Nd^2(z,w)}+O(N^{-\infty}),$$ and  in general,
$$|\Pi_{N,\nu}(z,w)|\leq C N^m e^{-\lambda \sqrt{N}d{(z,w)}}$$ for
some constant $C$ and $\lambda$.

Since we assume $d\nu$ is a volume form on $M$,  there exists a
positive function $J\in C^\infty(M)$ such that $d\nu=J
\omega_h^m$. We break up the right side of (\ref{SY}) into
$$ \int_{d(z,w)\leq
N^{-1/3}}+\int_{d(z,w)\geq N^{-1/3}}.$$ The first term is bounded
by
$$\begin{array}{l} \leq CN^m \int_{d(z,w)\leq N^{-1/3}} e^{-\frac{1}{4}Nd^2(z,w)} J
\omega^m(z)  \\ \\ \leq  C(h,\nu)N^m \int_{0}^\infty
e^{-\frac{1}{4}N\rho^2}d \rho^{2m} +O(N^{-\infty})\leq C'(h,\nu)\end{array}$$ The
second term is bounded by
$$\begin{array}{l} \leq CN^m \int_ {d(z,w)\geq N^{-1/3}}e^{-\lambda \sqrt{N}d(z,w)}J\omega^m \\ \\ \leq CN^m
\int_{M}e^{-\lambda N^{\frac{1}{6}}} d\nu\leq O(N^{-\infty}) \end{array}$$ as $N$ large enough. Thus the operator norm $\Phi\Pi_N$ is bounded by some constant $C'(h,g,\nu)$.
\end{proof}

\begin{rem} The assumption that $d\nu$ is a smooth volume form
allows us to take the adjoint of $\nabla$. \end{rem}

We now give a more general estimate.
 We assume again that  $h=e^{-\phi}$ has positive curvature $\omega_h > 0$.
  But we now relax the assumption that $d\nu$ is a smooth volume form, and only assume that  $d\nu$ satisfies the $L^2$
   condition:$$ \int_M |s|^2e^{-N\phi}\omega_h^m \leq C N^r\int_M |s|^2e^{-N\phi}d\nu$$  for any
   $s \in H^0(M, L^N)$ and for some $r\geq 0$.

    \begin{lem} \label{carl} Let $\dim M = m$. Under the above assumptions,
we have $$\|\nabla s\|_{(h^N\otimes g,\nu)}^2\leq C N^{r+2m + 2}
\|s\|^2_{(h^N,\nu)}
$$ where $s \in H^0(M, L^N)$. \end{lem}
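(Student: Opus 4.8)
The plan is to route every estimate through the smooth volume form $\omega_h^m$ (a genuine volume form since $\omega_h>0$), where the Bergman kernel parametrix established in the proof of Lemma \ref{volume} is available, and to invoke the $L^2$-condition (\ref{condition}) only once, at the very end. The reason we cannot simply imitate Lemma \ref{volume} is that $d\nu$ is no longer a volume form, so there is no adjoint of $\nabla$ with respect to $(h^N,\nu)$ and the Schur--Young argument is unavailable with $\nu$ as the integrating measure. I therefore pass through the sup-norm.

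First, since $\nu$ is a probability measure,
\[ \|\nabla s\|_{(h^N\otimes g,\nu)}^2=\int_M |\nabla s|_{h^N\otimes g}^2\,d\nu\leq \sup_M |\nabla s|_{h^N\otimes g}^2 , \]
which removes $\nu$ from the problem except for the final comparison step. Next I would bound the sup-norm by reproducing $s=\Pi_N s$, where $\Pi_N$ is the Bergman kernel of $H^0(M,L^N)$ for the inner product $(h^N,\omega_h^m)$, and differentiating under the integral. From the parametrix of Lemma \ref{volume} applied with $d\nu=\omega_h^m$ one has $\frac1N\nabla\Pi_N=\Phi\Pi_N$ with $|\Phi|_g\leq C$, whence the fully weighted kernel obeys $|\nabla_z\Pi_N(z,w)|\leq CN^{m+1}$. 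Cauchy--Schwarz in $w$ against $\omega_h^m$ then gives
\[ |\nabla s(z)|_{h^N\otimes g}\leq\Big(\int_M|\nabla_z\Pi_N(z,w)|^2\,\omega_h^m(w)\Big)^{\half}\|s\|_{(h^N,\omega_h^m)}\leq CN^{m+1}\,\|s\|_{(h^N,\omega_h^m)}, \]
where the kernel integral is bounded crudely by $(CN^{m+1})^2\vol_{\omega_h}(M)$; note that the off-diagonal Gaussian decay is not even needed. Hence $\sup_M|\nabla s|_{h^N\otimes g}^2\leq CN^{2m+2}\|s\|_{(h^N,\omega_h^m)}^2$.

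Finally I would apply the hypothesis: condition (\ref{condition}) says precisely $\|s\|_{(h^N,\omega_h^m)}^2\leq CN^r\|s\|_{(h^N,\nu)}^2$ for holomorphic $s$, which is exactly the direction of comparison produced by the preceding two steps. Chaining the three inequalities gives
\[ \|\nabla s\|_{(h^N\otimes g,\nu)}^2\leq \sup_M|\nabla s|_{h^N\otimes g}^2\leq CN^{2m+2}\|s\|_{(h^N,\omega_h^m)}^2\leq CN^{r+2m+2}\|s\|_{(h^N,\nu)}^2, \]
as claimed. The main difficulty is conceptual rather than computational: because $\nu$ need not be a volume form, all the genuine analysis (the parametrix and the kernel bound) must be carried out for the good measure $\omega_h^m$, and it is essential that (\ref{condition}) be applied to $s$ itself and not to $\nabla s$, since $\nabla s$ is not holomorphic. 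The one-directional nature of (\ref{condition})—bounding the $\omega_h^m$-norm by the $\nu$-norm—dovetails exactly with the sup-norm reduction, which is why this detour succeeds where a direct adjoint argument cannot.
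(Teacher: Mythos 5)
Your proposal is correct and is essentially the paper's own argument: both proofs reproduce $s$ through the Bergman projection $\Pi_{N,\omega_h}$ associated to the volume form $\omega_h^m$, apply Cauchy--Schwarz in the $w$-variable together with the uniform weighted parametrix bound $|\frac{1}{N}\nabla \Pi_{N,\omega_h}| = |\Phi\,\Pi_{N,\omega_h}| \leq C N^{m}$, and only at the end invoke the $L^2$-condition (\ref{condition}) on the holomorphic section $s$ itself. The single cosmetic difference is that you first pass to $\sup_M |\nabla s|^2_{h^N\otimes g}$ and then use $\nu(M)=1$, whereas the paper integrates the same uniform pointwise bound directly against $d\nu(z)$; the two steps are interchangeable.
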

\begin{proof}First we consider the following Bergman kernel $\Pi_{N,\omega_h}(z,w)$  with respect to the inner
product, $$\Pi_N (f e^N)(z)=\int _M \Pi_{N,\omega_h}(z,w) f
(w)e^{-N\phi} \omega_h^m(w) .$$ As above, we write
$\Phi(z,w)=\partial \phi+\partial_z \phi(z\cdot w) +\partial \log
A_N$.

By  Schwartz' inequality, we have (in an obvious notation)
$$\begin{array}{l}\|\frac{1}{N}\nabla \Pi_{N,\omega_h} s\|^2_{L^2(h^N, \nu)}\\ \\ \leq (\int_M |f|^2 e^{-N\phi}
\omega_h^m)(\int_{M\times M} |\Phi|^2|\Pi_{N,\omega_h}|^2e^{-N\phi(w)-N\phi(z)}
|dz|_g^2 \omega^m_h(w) d\nu(z))\end{array}$$ Since $|\Phi\Pi_{N,\omega_h}|^2 |dz|_g^2 \leq
CN^{2m}$ uniformly, this implies
$$\begin{array}{lll}\|\nabla s\|^2_{L^2(h^N\otimes g, \nu)} &= &\|\nabla \Pi_{N,\omega_h} s\|^2_{L^2(h^N\otimes g, \nu)}  \\ && \\
& \leq & C N^{2m + 2}\int_M |f|^2
e^{-N\phi} \omega_h \leq CN^{r+2 m + 2} \|s\|^2_{L^2(h^N, \nu)},\end{array} $$
under the $L^2$ condition.
\end{proof}

\subsection{Proof of  Theorem \ref{POTENTIALKINETIC}.}

We now prove Theorem \ref{POTENTIALKINETIC}. At first one might
expect the kinetic term to dominate the action, since its square
root  is the $H^1_2(d\nu)$ norm of $s$ and since that norm cannot
be bounded by the $L^p$ norm for any $p < \infty$, at least when
$\nu$ is a smooth area form.  However, we are only integrating
over holomorphic sections of $\ocal(N)$ and with the admissibility
assumption, the ratios of all norms are bounded above and below by
positive constants depending on $N$. Taking logarithm asymptotics
erases any essential difference between these norms.

The main step in the proof is the following generalization of
Proposition \ref{FSVOLZETA2intro}.

\begin{prop} \label{FSVOLZETA2introb} Let $(\pcal_N, \gamma_N)$
be the $P(\phi)_2$ ensemble with action (\ref{S}),  where $(h,
\nabla, \nu)$ is kinetic admissible.  Let $\vec K^N$ be the joint
probability current (\ref{JPCDEF}). Then,
\begin{eqnarray}
    \label{eq-030209b}
    \vec K^N(\zeta_1, \dots, \zeta_N) & = &  \frac{(\tilde{\Gamma}_N(\zeta_1, \dots
\zeta_N))}{\hat{Z}_N(h)} \frac{\exp \left( \sum_{i < j}
G_{h}(\zeta_i, \zeta_j) \right) \prod_{j = 1}^N e^{- 2 N
\phi(\zeta_j)} d^2 \zeta_j  }{\left(\int_{\CP^1} e^{k N
\int_{\CP^1} G_{h}(z,w) d\mu_{\zeta}} d\nu(z)
\right)^{\frac{N+1}{k}}}.
\end{eqnarray}
where $$(**) \;\;\sup_{\{\zeta_1, \dots, \zeta_N\} \in
(\CP^1)^{(N)}} \frac{1}{N^2} \log \tilde{\Gamma}_N (\zeta_1,
\dots, \zeta_N) \to 0$$ and where $ Z_N(h)$, resp. $\hat{Z}_N(h)$,
is the normalizing constant in Proposition 3 of \cite{ZZ}.
\end{prop}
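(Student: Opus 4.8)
The plan is to adapt the JPC computation of Proposition \ref{FSVOLZETA2intro} to the full action \eqref{S} by factoring out the potential contribution exactly as before and absorbing the kinetic term into the $\tilde\Gamma_N$ factor. First I would repeat the pushforward to $\PP\pcal_N$ and the change of variables to zero coordinates $\zeta_k$, which produces the Vandermonde factor $|\Delta(\zeta_1,\dots,\zeta_N)|^2$ and reduces the problem to integrating over the scale variable $a_0$. The density now carries both terms of the action: writing $s = a_0\prod_{j}(z-\zeta_j) e^N$, the potential term is homogeneous of degree $2j$ in $a_0$ for each monomial $c_j x^j$ (giving the same $\alpha_i|a_0|^{2i}$ structure as in \eqref{DCAL}), while the kinetic term $\|\nabla s\|^2_{(h^N\otimes g,\nu)}$ is homogeneous of degree $2$ in $a_0$, since $\nabla(a_0 s_\zeta) = a_0\nabla s_\zeta$. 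Thus the kinetic term contributes a factor $e^{-|a_0|^2 \kappa}$ where $\kappa = \kappa(\zeta_1,\dots,\zeta_N) = \|\nabla s_\zeta\|^2_{(h^N\otimes g,\nu)}$ with $s_\zeta = \prod_j(z-\zeta_j)e^N$.

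After the same rescaling $\rho = |a_0|^2 \to \alpha_k^{-1/k}\rho$ that produced \eqref{alphak}, the $a_0$-integral becomes $(\alpha_k)^{(N+1)/k}\tilde\Gamma_N$, where now
\begin{equation}
\tilde\Gamma_N(\zeta_1,\dots,\zeta_N) = \int_0^\infty e^{-(\rho^k + \beta_{k-1}c_{k-1}\rho^{k-1} + \cdots + \beta_1 c_1\rho + \tilde\kappa\rho)}\rho^N\,d\rho,
\end{equation}
with $\tilde\kappa = \kappa\,\alpha_k^{-1/k}$ the rescaled kinetic coefficient. The key structural point is that the kinetic term enters only as an additional \emph{linear} term $\tilde\kappa\rho$ in the exponent, of the same degree as the lowest potential term, so it is dominated by the leading $\rho^k$ term when $k \geq 2$. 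This is the precise sense in which the kinetic term is "lower order." Since the potential contribution $(\alpha_k)^{(N+1)/k}$ and all other factors are identical to those in Proposition \ref{FSVOLZETA2intro}, formula \eqref{eq-030209b} follows immediately once we establish $(**)$ for $\tilde\Gamma_N$.

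The remaining and only substantive task is therefore the bound $(**)$, i.e. $\frac{1}{N^2}\log\tilde\Gamma_N \to 0$ uniformly in the configuration. The upper bound is immediate from Lemma \ref{BOUNDS}: since the kinetic coefficient $\tilde\kappa \geq 0$, dropping it only increases the integrand, so $\tilde\Gamma_N \leq \Gamma_N$ and the existing upper bound applies verbatim. For the lower bound I would mimic the lower-bound argument of Lemma \ref{BOUNDS} but restricted to the interval $\rho\in[0,1]$, on which $\tilde\kappa\rho \leq \tilde\kappa$; the point is that the kinetic admissibility hypothesis \eqref{BERN}, namely $\|\nabla s_\zeta\|^2_{(h^N\otimes g,\nu)} \leq CN^k\|s_\zeta\|^2_{(h^N,\nu)}$, controls $\tilde\kappa = \kappa\,\alpha_k^{-1/k}$ at worst by a power $N^{O(1)}$ of $N$ after accounting for the normalization, so that $e^{-\tilde\kappa}$ only degrades the lower bound by a factor $e^{-O(N^{c})}$ with $c < 2$. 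Taking $\frac{1}{N^2}\log$ then sends this correction to zero.

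The main obstacle is precisely controlling the size of $\tilde\kappa$: one must verify that after the rescaling by $\alpha_k^{-1/k}$ the kinetic coefficient does not grow faster than some fixed power $N^{c}$ with $c<2$, uniformly over all configurations $\{\zeta_1,\dots,\zeta_N\}$. This is exactly what the Bernstein inequality \eqref{BERN} is designed to guarantee, via the comparison $\kappa = \|\nabla s_\zeta\|^2 \leq CN^k\|s_\zeta\|^2 = CN^k\alpha_1$ together with the Hölder bound $\alpha_1 \leq \alpha_k^{1/k}$ already used to show $\beta_i\leq 1$ in Lemma \ref{BOUNDS}; hence $\tilde\kappa \leq CN^k$, which contributes $\frac{1}{N^2}\log e^{-\tilde\kappa} \leq \frac{CN^k}{N^2}$ — this is harmless only if $k<2$, so for $k\geq 2$ one must instead use the refined estimate \eqref{crucialt} giving $\kappa \leq CN^2\alpha_1$ in the applications, or note that in the lower bound the relevant $\rho\in[0,1]$ range makes the exponent $\tilde\kappa\rho$ bounded by $\tilde\kappa$ while the $\rho^k$ growth dominates, so the sharp accounting must confirm $\tilde\kappa = O(N^{c})$ with $c<2$ as supplied by kinetic admissibility. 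Once this uniform polynomial bound is in hand, the sandwiching of $\tilde\Gamma_N$ between two quantities of the form $e^{o(N^2)}$ completes $(**)$ and hence the proposition.
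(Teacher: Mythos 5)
Your reduction of the JPC formula itself is exactly the paper's: the kinetic term is quadratic in $a_0$, so after passing to zero coordinates and rescaling $\rho\to\alpha_k^{-1/k}\rho$ it contributes only an extra linear term $\tilde\kappa\rho$ in the exponent of $\tilde\Gamma_N$ (the paper writes $\tilde\kappa=\eta\,\alpha_k^{-1/k}$ with $\eta=|a_0|^{-2}\|\nabla s\|^2_{(h^N\otimes g,\nu)}$), and the upper bound in $(**)$ follows by dropping this positive term, as you say. The genuine gap is in your lower bound. You propose to restrict to $\rho\in[0,1]$ and bound $e^{-\tilde\kappa\rho}\geq e^{-\tilde\kappa}$, which forces you to need $\tilde\kappa=O(N^c)$ with $c<2$; you then assert that kinetic admissibility supplies this. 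It does not. The Bernstein hypothesis (\ref{BERN}) combined with H\"older ($\alpha_1\leq\alpha_k^{1/k}$) gives only $\tilde\kappa\leq CN^n$ where $n$ is whatever power appears in (\ref{BERN}): even in the best case (\ref{crucialt}), relevant to smooth volume forms, one gets $n=2$, and Lemma \ref{carl} gives $n=r+2m+2\geq 4$. With $\tilde\kappa\sim CN^2$ your bound yields $\frac{1}{N^2}\log\tilde\Gamma_N\geq -C+o(1)$, which does not tend to $0$, so the argument fails precisely at the borderline case the theorem is supposed to cover, and fails badly for larger $n$.

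The fix — and this is what the paper does — is not to freeze the interval at $[0,1]$ but to let it shrink with $N$: the linear term $CN^n\rho$ pushes the maximizer of the phase $-\rho^k-\cdots-CN^n\rho+N\log\rho$ down to $\rho_N\sim C'{}^{-1}N^{1-n}$, and evaluating there (steepest descent, or simply restricting the integral to $[0,N^{1-n}]$) shows that the kinetic penalty at the saddle is only $CN^n\rho_N=O(N)$ while the cost of the small interval is $N\log\rho_N\sim-(n-1)N\log N$. Both are $o(N^2)$, so
\begin{equation*}
\frac{1}{N^2}\log\int_0^\infty e^{-(\rho^k+\cdots+\beta_1|c_1|\rho+CN^n\rho)}\rho^N\,d\rho
\;\sim\; \frac{1}{N}\log\rho_N-O\Bigl(\frac{1}{N}\Bigr)\;\longrightarrow\;0
\end{equation*}
uniformly in the configuration, \emph{for every fixed power} $n$. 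This is the point your accounting misses: the speed $N^2$ is what makes any polynomially bounded kinetic coefficient harmless, not a bound $\tilde\kappa=o(N^2)$, and the argument must exploit the location of the saddle rather than a fixed test interval.
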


 \begin{proof}

 We closely follow the proof of Proposition \ref{FSVOLZETA2intro},
 and do not repeat the common steps.
For the $P(\phi)_2$ measures (\ref{GL}) with kinetic term,
\begin{equation} \label{JPCc} \begin{array}{lll}  \dcal(a_0; \zeta_1, \dots, \zeta_N)&
=& e^{- \int_{\CP^1} (|\nabla \prod_{j = 1}^N (z - \zeta_j)|^2 + P
(|a_0|^2 | \prod_{j = 1}^N (z - \zeta_j)|^2_{h^N}) d\nu(z)} \\
&& \\ &= & e^{-(\alpha_k|a_0|^{2k}+\alpha_{k-1}c_{k - 1}
|a_0|^{2k-2}+\cdots + \alpha_1 c_1  |a_0|^2 + \eta |a_0^2|)},
\end{array} \end{equation}
where
\begin{equation} \label{eta} \eta = |a_0|^{-2}\|\nabla s\|^2_{L^2(h^N\otimes g,\nu)}. \end{equation}

Thus, the addition of the kinetic term changes the pushed forward
probability density from (\ref{DCAL})  to
$$\begin{array}{l} \int \dcal(a_0; \zeta_1,
\dots, \zeta_N) |a_0|^{2 N} d^2a_0 \\ \\  =
\int_{\mathbb{C}}e^{-(\alpha_k|a_0|^{2k}+\alpha_{k-1}c_{k - 1}
|a_0|^{2k-2}+\cdots + c_1 \alpha_1 |a_0|^2 +  \eta
|a_0|^2)}|a_0|^{2N}da_0 \wedge d \bar a_0 \\ \\
= \int_{0}^\infty e^{-(\alpha_k\rho^{k}+\alpha_{k-1}c_{k - 1}
\rho^{k-1}+\cdots + c_1 \alpha_1 \rho +  \eta \rho)}\rho^{N}d\rho,
\end{array}$$ where $\rho=|a_0|^2$ and
   $\alpha_i$ is defined by (\ref{alphaj}). We only need to
   understand the effect of the new $\eta$ term.

 We  change variable $\rho\rightarrow \rho \alpha _k^{\frac{1}{k}}$, to get  $$
 \int \dcal(a_0; \zeta_1,
\dots, \zeta_N) |a_0|^{2 N} d^2a_0  = \alpha _k^{\frac{N+1}{k}}
\tilde{\Gamma}_N(\zeta_1, \dots, \zeta_N), $$ where
$$\tilde{\Gamma}_N(\zeta_1, \dots, \zeta_N) : = \int_{0}^\infty e^{-(\rho^{k}+\beta_{k-1}c_{k - 1}
\rho^{k-1}+\cdots + c_1 \beta_1 \rho + \frac{
\eta}{\alpha_k^{\frac{1}{k}}} \rho)}\rho^{N}d\rho.$$ This is the
same expression as in Proposition \ref{FSVOLZETA2intro} except
that the $\Gamma_N$ factor has changed. Hence to prove (**),  it
suffices to prove
$$\frac{1}{N^2}\log \int_{0}^\infty e^{-(\rho^{k}+\beta_{k-1}c_{k
- 1} \rho^{k-1}+\cdots + c_1 \beta_1 \rho + \frac{
\eta}{\alpha_k^{\frac{1}{k}}} \rho)}\rho^{N}d\rho \rightarrow 0.$$

We first prove that the limit is bounded above by $0$. Since the
addition of the positive quantity $\eta \alpha_k^{-\frac{1}{k}}$
 increases the exponent, we have
 $$\begin{array}{l}\frac{1}{N^2}\log \int_{0}^\infty
e^{-(\rho^{k}+\beta_{k-1}c_{k - 1} \rho^{k-1}+\cdots + c_1 \beta_1
\rho + \frac{ \eta}{\alpha_k^{\frac{1}{k}}} \rho)}\rho^{N}d\rho \\
\\ \leq \frac{1}{N^2}\log \int_{0}^\infty
e^{-(\rho^{k}+\beta_{k-1}c_{k - 1} \rho^{k-1}+\cdots + c_1 \beta_1
\rho )}\rho^{N}d\rho,  \end{array}$$ so the integral is bounded
above by its analogue in the pure potential case, and  it follows
from the proof in section \ref{without} that the last integral
tends to 0.

 We now consider the lower bound. By Lemmas  \ref{volume} and \ref{carl} (with $m = 1$) and by
H\"{o}lder inequality,  we have
 $$\eta\leq CN^n|a_0|^{-2}\|s\|^2_{L^2(h^N,\nu)}\leq CN^n \alpha_k^{\frac{1}{k}},$$
  in the cases  $n=2$ with  $\nu$  a smooth volume form or $n\geq 4$ when $\nu$ satisfies the weighted
  $L^2$ Bernstein inequality (\ref{BERN}). We then have,

   $$\begin{array}{l} \frac{1}{N^2}\log \int_{0}^\infty
e^{-(\rho^{k}+\beta_{k-1}c_{k - 1} \rho^{k-1}+\cdots + c_1 \beta_1
\rho + \frac{ \eta}{\alpha_k^{\frac{1}{k}}} \rho)}\rho^{N}d\rho\\
\\
\geq \frac{1}{N^2}\log \int_{0}^\infty
e^{-(\rho^{k}+\beta_{k-1}c_{k - 1} \rho^{k-1}+\cdots + c_1 \beta_1
\rho + CN^n \rho)}\rho^{N}d\rho \\ \\
\geq \frac{1}{N^2}\log \int_{0}^\infty
e^{-(\rho^{k}+\beta_{k-1}|c_{k - 1}| \rho^{k-1}+\cdots + |c_1|
\beta_1 \rho + CN^n \rho)}\rho^{N}d\rho.
\end{array}$$  Hence,  it
suffices to prove that $$\frac{1}{N^2}\log \int_{0}^\infty
e^{-(\rho^{k}+\beta_{k-1}|c_{k - 1}| \rho^{k-1}+\cdots + |c_1|
\beta_1 \rho + CN^n \rho)}\rho^{N}d\rho \geq 0. $$

We use the steepest descent  method to show that the latter tends
to zero. The maximum of the phase function occurs when
$$k\rho_N^{k}+(k-1)\beta_{k-1}|c_{k - 1}|
\rho_N^{k-1}+\cdots + |c_1| \beta_1 \rho_N + CN^n \rho_N =N.$$ It
follows  first that  $\rho_N \leq \frac{1}{CN^{ n-1}}<1$.  Thus
$$\begin{array}{lll}N&=&k\rho_N^{k}+(k-1)\beta_{k-1}|c_{k - 1}| \rho_N^{k-1}+\cdots +
|c_1| \beta_1 \rho_N + CN^n \rho_N\\ &&\\ &\leq & k\rho_N+(k-1)\beta_{k-1}|c_{k - 1}|
\rho_N+\cdots + |c_1| \beta_1 \rho_N + CN^n \rho_N
\end{array}$$which implies $$\rho_N \geq
\frac{N}{C(k,c_{k-1},\cdots,c_1)+CN^n},$$ and therefore $$\rho_N
\sim \frac{1}{C'N^{n-1}}$$ for $N$ large enough. Thus by the
formula of steepest descent, $$\begin{array}{l}\frac{1}{N^2}\log \int_{0}^\infty
e^{-(\rho^{k}+\beta_{k-1}|c_{k - 1}| \rho^{k-1}+\cdots + |c_1|
\beta_1 \rho + CN^n \rho)}\rho^{N}d\rho \\ \\ \sim \frac{1}{N}\log\rho_N-\frac{1}{N^2}( \rho_N^{k}+\beta_{k-1}|c_{k
- 1}| \rho^{k-1}_N+\cdots + |c_1| \beta_1 \rho_N + CN^n
\rho_N)\\ \\ \sim -\frac{(n-1)\log
(C'N)}{N}-\frac{1}{N^2}((\frac{1}{C'N^{n-1}})^k+\cdots+\beta_1|c_1|\frac{1}{C'N^{n-1}})-C\frac{1}{C'N}
\end{array}$$ which goes to $0$ as $N\rightarrow \infty$, and (**)
holds.
\end{proof}

This completes the proof of Proposition \ref{FSVOLZETA2introb}.
The rest of the proof proceeds exactly as in \S \ref{COMPLETE},
completing the proof of Theorem \ref{POTENTIALKINETIC}.

\end{document}